\DeclareSymbolFont{EulerExtension}{U}{euex}{m}{n}
\DeclareMathSymbol{\euintop}{\mathop} {EulerExtension}{"52}
\DeclareMathSymbol{\euointop}{\mathop} {EulerExtension}{"48}
\def \id{\operatorname{id}}
\def \k{\Bbbk}
\numberwithin{equation}{section}
\newtheorem{theorem}{Theorem}[section]
\newtheorem{lemma}[theorem]{Lemma}
\newtheorem{proposition}[theorem]{Proposition}
\newtheorem{corollary}[theorem]{Corollary}
\newtheorem{definition}[theorem]{Definition}
\newtheorem{example}[theorem]{Example}
\newtheorem{remark}[theorem]{Remark}
\begin{document}
\title[Derived categories of module categories]{On derived categories of module categories over multiring categories}
\thanks{}
\author[J. Yu]{Jing Yu}
\address{School of Mathematical Sciences, University of Science and Technology of China, Hefei 230026, People's Republic
of China}
\email{yujing46@ustc.edu.cn}
\thanks{2020 \textit{Mathematics Subject Classification}. 18G80, 18M05, 16T05}
\keywords{Derived equivalences, Triangulated module categories, Localization, Smash product}
\maketitle

\date{}
\begin{abstract}
Let $\mathcal{A}$ and $\mathcal{B}$ be subcategories of tensor categories $\mathcal{C}$ and $\mathcal{D}$, respectively, both of which are abelian categories with finitely many isomorphism classes of simple objects. We prove that if their derived categories $\mathbf{D}^b(\mathcal{A})$ and $\mathbf{D}^b(\mathcal{B})$ are left triangulated tensor ideals and are equivalent as triangulated $\mathbf{D}^b(\mathcal{C})$-module categories via an equivalence induced by a monoidal triangulated functor $F:\mathbf{D}^b(\mathcal{C})\rightarrow \mathbf{D}^b(\mathcal{D})$, then the original module categories $\mathcal{A}$ and $\mathcal{B}$ are themselves equivalent. We then apply this result to smash product algebras. Furthermore, the localization theory of module categories and triangulated module categories is investigated.
\end{abstract}
\maketitle
\section{Introduction}
Derived categories and their equivalences, introduced by Grothendieck and Verdier \cite{Ver77}, play a vital role in the representation theory of finite-dimensional algebras and finite groups (see \cite{CR08, Hap88, Rou06}). In general, the equivalence of derived categories of abelian categories does not imply the equivalence of the abelian categories themselves. However, if such a derived equivalence carries additional structure, it becomes possible for the underlying abelian categories to be equivalent. In algebraic geometry, the term reconstruction theorems refers to results that characterize this phenomenon, as shown in \cite{Bal02,BO01}. In the case of algebras, Aihara and Mizuno \cite{AM17} proved that a preprojective algebra of Dynkin type is derived equivalent only to itself up to Morita equivalence. Zhang and Zhou \cite{ZZ22} showed that an analogous phenomenon holds for finite-dimensional hereditary weak bialgebras. Furthermore, Xu and Zheng \cite{XZ25} demonstrated that for tensor categories with finitely many isomorphism classes of simple objects, tensor equivalence corresponds precisely to an equivalence between their derived categories as monoidal triangulated categories.

There is no doubt that representations of weak bialgebras and tensor categories share the common feature of possessing a tensor structure. A natural question then arises: if we no longer consider tensor structure but instead introduce an additional module action induced by a monoidal triangulated functor, can derived equivalence and Morita equivalence imply each other?

More specifically, we aim to prove the following theorem, which appears as Theorem \ref{thm:eq=} in this paper:
\begin{theorem}\label{thm:0}
Let $\mathcal{C}$ and $\mathcal{D}$ be tensor categories, and let subcategories $\mathcal{A} \subset \mathcal{C}$ and $\mathcal{B} \subset \mathcal{D}$ be abelian categories with finitely many isomorphism classes of simple objects. Assume further that $\mathbf{D}^b(\mathcal{A})$ and $\mathbf{D}^b(\mathcal{B})$ are left triangulated tensor ideals of $\mathbf{D}^b(\mathcal{C})$ and $\mathbf{D}^b(\mathcal{D})$, respectively.
Suppose $F: \mathbf{D}^b(\mathcal{C}) \to \mathbf{D}^b(\mathcal{D})$ is a monoidal triangulated functor such that $F(\mathbf{D}^b(\mathcal{A})) \subseteq \mathbf{D}^b(\mathcal{B})$ and that its restriction $F|_{\mathbf{D}^b(\mathcal{A})}: \mathbf{D}^b(\mathcal{A}) \to \mathbf{D}^b(\mathcal{B})$ is an equivalence. Then $\mathcal{A}$ and $\mathcal{B}$ are equivalent as left $\mathcal{C}$-module categories.
\end{theorem}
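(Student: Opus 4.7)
The plan is to promote the triangulated equivalence $F|_{\mathbf{D}^b(\mathcal{A})}: \mathbf{D}^b(\mathcal{A}) \to \mathbf{D}^b(\mathcal{B})$ to an equivalence of abelian categories $\bar F: \mathcal{A} \simeq \mathcal{B}$, and then verify that $\bar F$ respects the left $\mathcal{C}$-module structure. The central step is to establish that $F|_{\mathbf{D}^b(\mathcal{A})}$ is t-exact with respect to the standard t-structures, so that restricting to the hearts produces the desired abelian equivalence.

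For t-exactness I would adapt the technique of Xu-Zheng \cite{XZ25} from the tensor setting to the module setting. The tensor ideal hypothesis guarantees that $\mathbf{D}^b(\mathcal{C}) \otimes \mathbf{D}^b(\mathcal{A}) \subseteq \mathbf{D}^b(\mathcal{A})$, and monoidality of $F$ supplies a natural isomorphism $F(X \otimes A) \cong F(X) \otimes F(A)$ inside $\mathbf{D}^b(\mathcal{B})$. In particular $F(\mathbf{1}_\mathcal{C}) \cong \mathbf{1}_\mathcal{D}$. For each simple $S$ of $\mathcal{A}$, writing $F(S)$ up to semisimplification as a direct sum of shifts of simples of $\mathcal{B}$ and acting by arbitrary $X \in \mathcal{C}$ forces all such shifts to coincide; the finiteness of simples in both $\mathcal{A}$ and $\mathcal{B}$ together with the bijection on isomorphism classes of simples induced by the equivalence then pins the common shift to zero. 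A devissage along Jordan-H\"older filtrations, using that $F$ carries distinguished triangles to distinguished triangles, extends t-exactness from simples to all of $\mathcal{A}$; the essential surjectivity half follows because $F|_{\mathbf{D}^b(\mathcal{A})}$ is an equivalence.

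Once the restriction $\bar F: \mathcal{A} \to \mathcal{B}$ is an equivalence of abelian categories, I would restrict the monoidal natural isomorphism $F(X \otimes A) \cong F(X) \otimes F(A)$ to obtain $\bar F(X \otimes A) \cong X \cdot \bar F(A)$ for $X \in \mathcal{C}$ and $A \in \mathcal{A}$, where the right-hand side is interpreted via the $\mathcal{C}$-module structure on $\mathcal{B}$ coming from $F|_\mathcal{C}$ (which one checks is t-exact by an analogous argument, so that $F$ descends to a tensor functor $\mathcal{C} \to \mathcal{D}$ in the sense of \cite{XZ25}). Coherence of these isomorphisms is inherited from the monoidal coherence of $F$, giving $\bar F$ the structure of a $\mathcal{C}$-module functor, and since the underlying functor is already an equivalence the proof is complete.

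The main obstacle I anticipate is the t-exactness step, specifically showing that each simple $S$ of $\mathcal{A}$ is sent by $F$ to an object concentrated in a single cohomological degree. In Xu-Zheng's tensor setting the unit is internal to $\mathcal{C}$ and pins down the amplitude of $F(S)$ directly via the unit axiom; here the unit of $\mathcal{C}$ need not lie in $\mathcal{A}$, so the amplitude must be bounded through the external $\mathcal{C}$-action combined with the finiteness of isomorphism classes of simples. Combining the counting argument on simples with the module-theoretic version of the tensor rigidity, while keeping the module-compatibility intact at every stage, is the main technical work.
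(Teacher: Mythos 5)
Your overall strategy (prove $F|_{\mathbf{D}^b(\mathcal{A})}$ is t-exact for the standard t-structures, restrict to hearts, then check module compatibility) is the right one and coincides with the paper's strategy, but the mechanism you propose for the crucial t-exactness step does not close the gap you correctly identified, and the paper's solution is genuinely different.

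The obstruction you flag is real: since the unit of $\mathcal{C}$ need not lie in $\mathcal{A}$, one cannot pin the cohomological amplitude of $F(S)$ by the unit axiom as in Xu--Zheng. But your proposed fix — writing $F(S)$ as a sum of shifts of simples of $\mathcal{B}$, acting by $X\in\mathcal{C}$ to force the shifts to coincide, and then using finiteness of simples together with ``the bijection on isomorphism classes of simples induced by the equivalence'' to pin the common shift to zero — does not work as stated. First, acting by $X \in \mathcal{C}$ via the module bifunctor is t-exact for the standard t-structures (it is biexact on hearts), so it cannot compress a spread of cohomological degrees. Second, the counting argument is circular: a triangulated equivalence composed with a nonzero shift $[k]$ still induces a bijection between the simples of $\mathcal{A}$ and shifts of simples of $\mathcal{B}$, so finiteness and bijectivity alone cannot rule out $k\neq 0$; you need an additional invariant to anchor the degree.

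The paper resolves this by exploiting that $\mathcal{A}$, being a left tensor ideal of $\mathcal{C}$ with $\mathcal{A}\subset\mathcal{C}$, is automatically closed under the \emph{internal} tensor product: $\mathcal{A}\otimes\mathcal{A}\subseteq\mathcal{A}$. Concretely, the paper transports the standard t-structure of $\mathbf{D}^b(\mathcal{A})$ along $F$ to a bounded t-structure $\tilde{\mathbbm{t}}=(\mathcal{U},\mathcal{V})$ on $\mathbf{D}^b(\mathcal{B})$ (Lemma \ref{lem:newtstructure}), then applies the comparison lemma for bounded t-structures with finitely many simples in the heart (Lemma \ref{lem:finitet=t0}) to obtain extremal integers $m\leq n$ with $D^{\leq m}_\mathcal{B}\subseteq \mathcal{U}\subseteq D^{\leq n}_{\mathcal{B}}$. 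Choosing $M'\in D^{\leq 0}_{\mathcal{A}}$ with $H^n_{\mathbbm{t}_\mathcal{B}}(F(M'))\neq 0$, one forms $M'\otimes M'\in D^{\leq 0}_{\mathcal{A}}$ (here the tensor-ideal hypothesis enters) and uses monoidality of $F$, faithfulness (Corollary \ref{coro:triangulatedfaithful}) and the K\"unneth formula (Proposition \ref{prop:kunneth}) to see that $F(M')\otimes F(M')$ has nonzero $H^{2n}$; since it also lies in $\mathcal{U}\subseteq D^{\leq n}_{\mathcal{B}}$, this forces $2n\leq n$, i.e. $n\leq 0$, and dually $m\geq 0$, whence $m=n=0$ and $\tilde{\mathbbm{t}}=\mathbbm{t}_\mathcal{B}$. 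This self-squaring argument is precisely the anchoring device your proposal is missing. Two smaller points: the paper does not need (and does not claim) that $F|_{\mathcal{C}}$ is t-exact or that $F$ descends to a tensor functor $\mathcal{C}\to\mathcal{D}$; the $\mathcal{C}$-module structure on $\mathcal{B}$ and the module-functor structure on $F|_{\mathcal{A}}$ come directly from the triangulated module t-structure via Lemma \ref{lem:heartmodule}. Also, the paper actually derives the theorem as a corollary of the more general Theorem \ref{thm:thick=serre}, which treats full and dense (not necessarily faithful) $F|_{\mathbf{D}^b(\mathcal{A})}$ by passing to Serre and Verdier quotients.
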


In fact, Theorem \ref{thm:0} can be viewed as a generalization of the result by Xu and Zheng, which corresponds to the case where the subcategories $\mathcal{A}$ and $\mathcal{B}$ are taken to be the entire tensor categories themselves. It should be noted that the validity of Xu and Zheng's conclusion for tensor categories with infinitely many isomorphism classes of simple objects remains unknown. Theorem \ref{thm:0} can be applied in studying the local structure of monoidal triangulated equivalences between derived categories of tensor categories with infinitely many isomorphism classes of simple objects (such as the representation categories of certain infinite-dimensional Hopf algebras analogous to quantum groups). More generally, Theorem \ref{thm:0} extends to other contexts, including the module categories of algebras in a tensor category, such as the module categories of smash product algebras (see Example \ref{ex:crossed}).

This paper builds upon fundamental theories of module categories and triangulated module categories. In fact, the theory of module categories over monoidal categories was first systematically developed by Ostrik \cite{Ost03}, and was later applied, by Ostrik together with Etingof, Gelaki and Nikshych, to the study of tensor categories \cite{ENO05, EGNO15}. Meanwhile, Stevenson \cite{Ste13} introduced the concept of a triangulated module category to study the thick submodules of a compactly generated triangulated category $\mathcal{K}$ acted upon by a monoidal triangulated category $\mathcal{T}$. Stevenson's work serves as a relative counterpart to Balmer's tensor triangular geometry \cite{Bal10}.

To prove Theorem \ref{thm:0}, we employ compatible t-structures and triangulated module t-structures. We establish a general K\"{u}nneth formula on triangulated module categories (Proposition \ref{prop:kunneth}), thereby extending the version for monoidal triangulated categories (\cite{Big07}). We also prove that a faithful module category over a multiring category gives rise to a faithful module product bifunctor between the corresponding derived categories (Corollary \ref{coro:triangulatedfaithful}). Finally, using the relation between a bounded t-structure with finitely many simple objects up to isomorphism in its heart and general bounded t-structures (Lemma \ref{lem:finitet=t0}), together with the preservation of t-structures by full and dense triangulated functors (Lemma \ref{lem:newtstructure}), we establish Theorem \ref{thm:0}.

Moreover, we investigate the localization theory for module categories and triangulated module categories. We show that the Serre quotient of module categories gives rise to the Verdier quotient of triangulated module categories (Remark \ref{rm:Serre=thick}). Conversely, if the Verdier quotient functor is full and is induced as a triangulated module functor by a monoidal triangulated functor, then the localization of the triangulated module category can induce a localization of the module category (see Theorem \ref{thm:thick=serre}).

This paper is organized as follows. Section \ref{section2} reviews the foundational concepts and properties of the module categories and the triangulated module categories, and investigates their localization theory. Section \ref{section3} develops the theory of compatible t-structures and triangulated module t-structures on triangulated module categories. We establish a general K\"{u}nneth formula on triangulated module categories and prove that a faithful module category over a multiring category gives rise to a faithful module product bifunctor between the corresponding derived categories. In Section \ref{section4}, we begin by examining the localization of triangulated module categories under certain assumptions. We then prove our main result: subcategories of tensor categories, both are abelian categories with finitely many simple objects up to isomorphism, are equivalent as module categories, provided that their derived categories are equivalent as triangulated module categories via an equivalence induced by a monoidal triangulated functor. An application to smash product algebras concludes the section.
\section{Module categories and triangulated module categories}\label{section2}
Throughout this paper $\k$ denotes an \textit{algebraically closed field}.
In this section, we recall some definitions and basic properties related to module categories and triangulated module categories. For further details of triangulated categories, monoidal categories and module categories, we refer the reader to \cite{Hap88} and \cite{EGNO15}.
\subsection{Multiring categories and module categories}
We first introduce the definition of multiring categories.
\begin{definition}\emph{(}\cite[Definition 4.2.3]{EGNO15}\emph{)}
A multiring category over $\k$ is a $\k$-linear abelian monoidal category $\mathcal{C}$ with bilinear and biexact tensor product. If in addition $\operatorname{End}_\mathcal{C}(\mathbbm{1})\cong \k$, then we will call $\mathcal{C}$ a ring category.
\end{definition}
Note that the condition of being ``locally finite" (as in \cite[Definition 4.2.3]{EGNO15}) is omitted from our definition, as it is not essential for the results of this paper. Here by a \textit{locally finite category} (\cite[Definition 1.8.1]{EGNO15})) we mean one whose morphism spaces are finite-dimensional and in which every object has finite length. A locally finite category is said to be \textit{finite} (\cite[Definition 1.8.5]{EGNO15})), provided that it has enough projective objects and finitely many isomorphism classes of simple objects.

Recall that a monoidal category is \textit{rigid} (\cite[Definition 2.10.11]{EGNO15}) if every object has left and right duals in the sense of \cite[Definitions 2.10.1 and 2.10.2]{EGNO15}. We now proceed to introduce the concept of a multitensor category.
\begin{definition}\emph{(}\cite[Definition 4.1.1]{EGNO15}\emph{)}
A locally finite $\k$-linear abelian rigid monoidal category $\mathcal{C}$ is called a multitensor category if the bifunctor $\otimes:\mathcal{C}\times \mathcal{C}\rightarrow \mathcal{C}$ is bilinear on morphisms. If in addition $\operatorname{End}_\mathcal{C}(\mathbbm{1})\cong \k$, $\mathcal{C}$ is called a tensor category.
\end{definition}
It follows from \cite[Proposition 4.2.1]{EGNO15} that every multitensor category is a multiring category, and every tensor category is a ring category.

While the notion of a tensor category categorifies the notion of a ring, a module category provides the categorified analogue of a module over a ring. We now formally introduce its definition.
\begin{definition}\emph{(}\cite[Definition 2.6]{Ost03}\emph{)}
Let $(\mathcal{C},\otimes,\mathbbm{1},a,l,r)$ be a monoidal category. A left module category over $\mathcal{C}$ is a category $\mathcal{M}$ equipped with a module product bifunctor $\otimes:\mathcal{C}\times \mathcal{M}\rightarrow \mathcal{M}$ and functorial associativity and unit isomorphisms $m_{X, Y, M}: (X\otimes Y)\otimes M\rightarrow X\otimes (Y\otimes M), l_M:\mathbbm{1}\otimes M\rightarrow M$ for any $X, Y\in\mathcal{C}, M\in\mathcal{M}$
such that the diagrams
\begin{displaymath}
\xymatrix{
&&\ar[dll]_{a_{X, Y, Z}\otimes \id_M} ((X\otimes Y)\otimes Z)\otimes M\ar[drr]^{m_{X\otimes Y, Z, M}}\\
(X\otimes (Y\otimes Z))\otimes M\ar[d]_{m_{X,Y\otimes Z, M}}&&&& (X\otimes Y)\otimes (Z\otimes M)\ar[d]^{m_{X, Y, Z\otimes M}}\\
X\otimes ((Y\otimes Z)\otimes M)\ar[rrrr]^{id_X\otimes m_{Y,Z,M}} &&&&X\otimes (Y\otimes(Z\otimes M))
}
\end{displaymath}
and
\begin{displaymath}
\xymatrix{
\ar[dr]_{r_X\otimes\id_M}(X\otimes \mathbbm{1})\otimes M\ar[rr]^{m_{X,\mathbbm{1},M}} &&X\otimes (\mathbbm{1}\otimes M)\ar[dl]^{\id_X\otimes l_M}\\
&X\otimes M
}
\end{displaymath}
commute.
\end{definition}

\begin{definition}\emph{(}\cite[Definition 7.2.1]{EGNO15}\emph{)}
Let $\mathcal{M}_1$ and $\mathcal{M}_2$ be two module categories over a monoidal category $\mathcal{C}$ with associativity isomorphisms $m^1$ and $m^2$. A $\mathcal{C}$-module functor from $\mathcal{M}_1$ to $\mathcal{M}_2$ consists of a functor $F:\mathcal{M}_1\rightarrow \mathcal{M}_2$ and a natural isomorphism $s_{X, M}:F(X\otimes M)\rightarrow X\otimes F(M)$, where $X\in \mathcal{C}, M\in\mathcal{M}_1$,
such that the following diagrams
\begin{displaymath}
\xymatrix{
&&\ar[dll]_{F(m^1_{X, Y, M})} F((X\otimes Y)\otimes M)\ar[drr]^{s_{X\otimes Y, M}}\\
F(X\otimes (Y\otimes M))\ar[d]_{s_{X,Y\otimes M}}&&&& (X\otimes Y)\otimes F( M)\ar[d]^{m^2_{X, Y, F(M)}}\\
X\otimes F(Y\otimes M)\ar[rrrr]^{id_X\otimes s_{Y,M}} &&&&X\otimes (Y\otimes F( M))
}
\end{displaymath}
and
\begin{displaymath}
\xymatrix{
\ar[dr]_{F(l_M)}F( \mathbbm{1})\otimes M\ar[rr]^{s_{\mathbbm{1},M}} && \mathbbm{1}\otimes F(M)\ar[dl]^{l_{F(M)}}\\
&F( M)
}
\end{displaymath}
commute. A $\mathcal{C}$-module equivalence $F:\mathcal{M}_1\rightarrow \mathcal{M}_2$ of $\mathcal{C}$-module categories is a module functor $(F, s)$ from $\mathcal{M}_1$ to $\mathcal{M}_2$ such that $F$ is an equivalence of categories.
\end{definition}
While the module product bifunctor for a module category over a general monoidal category has no additional constraints, we will require extra structures when considering module categories over a multiring category for our purposes.
\begin{remark}\rm
In what follows, by a module category over a multiring category $\mathcal{C}$, we mean an abelian category $\mathcal{M}$ endowed with a $\mathcal{C}$-module structure such that the module product bifunctor $\otimes: \mathcal{C} \times \mathcal{M} \to \mathcal{M}$ is bilinear on morphisms and exact in each variable. Moreover, $\mathcal{C}$-module functors are assumed to be exact.
\end{remark}

\begin{example}\rm
Let $\mathcal{C}$ be a multitensor category. According to \cite[Section 4.3]{EGNO15}, the unit object $\mathbbm{1}$ is semisimple, i.e., it decomposes as $\mathbbm{1}=\bigoplus_{i\in I} \mathbbm{1}_i$. Defining $\mathcal{C}_{ij}:=\mathbbm{1}_i\otimes \mathcal{C}\otimes \mathbbm{1}_j$, we obtain a decomposition $\mathcal{C}\cong \bigoplus_{i,j\in I}\mathcal{C}_{ij}$. Here, each $\mathcal{C}_{ii}$ forms a tensor category with unit object $\mathbbm{1}_i$, and for any $i,j\in I$, the category $\mathcal{C}_{ij}$ carries the structure of a left $\mathcal{C}_{ii}$-module category; in fact, it is a $(\mathcal{C}_{ii},\mathcal{C}_{jj})$-bimodule category (see \cite[Example 7.4.5]{EGNO15}).
\end{example}
\begin{example}\rm
Let $H$ be a Hopf algebra over $\k$ (not necessarily finite-dimensional).
Recall that a left $H$-\textit{comodule algebra} $A$ is a $k$-algebra equipped with a left $H$-comodule structure $\rho: A \to H \otimes A$ such that $\rho$ is an algebra homomorphism. Explicitly, this means $\rho(1_A) = 1_H \otimes 1_A$ and $\rho(ab) = \sum a_{(-1)}b_{(-1)} \otimes a_{(0)}b_{(0)}$ for all $a, b \in A$.
Let $A\text{-}\mathrm{mod}$ (resp. $H\text{-}\mathrm{mod}$) denote the category of finite-dimensional left modules over $A$ (resp. over $H$). The tensor product $V\otimes M$ of an $H$-module $V$ and an $A$-module $M$ is naturally an $A$-module, via $\rho.$ Therefore, this tensor product gives rise to a biexact bifunctor
$$
H\text{-}\mathrm{mod} \times A\text{-}\mathrm{mod} \rightarrow A\text{-}\mathrm{mod},
$$
which endows $A\text{-}\mathrm{mod}$ with the structure of a module category over $ H\text{-}\mathrm{mod}$.
\end{example}

Recall that for an abelian category $\mathcal{A}$, a full additive subcategory $\mathcal{B}$ of $\mathcal{A}$ is called a \textit{Serre subcategory} (\cite{Pop73}) if $\mathcal{B}$ is closed under taking subobjects, quotients and extensions. A Serre subcategory $\mathcal{I}$ of a
multiring category $\mathcal{C}$ is called a \textit{left (resp. right) Serre tensor ideal} (\cite[Subsection 3.2]{XZ25}) if $X\otimes I\in\mathcal{I}$ (resp. $I\otimes X\in\mathcal{I}$) for any $X\in \mathcal{C}, I\in\mathcal{I}.$ If $\mathcal{I}$ is both a left and right Serre tensor ideal, then it is called a \textit{two-sided Serre tensor ideal}.
\begin{lemma}\label{lem:leftserre}
Let $\mathcal{C}$ be a multiring category and $\mathcal{M}$ be a left $\mathcal{C}$-module category. For any nonzero object $M\in\mathcal{M}$, let $\operatorname{Ann}_{\mathcal{C}}(M):=\{X\in\mathcal{C}\mid X\otimes M\cong0\}$ be the full subcategory of $\mathcal{C}$. Then $\operatorname{Ann}_{\mathcal{C}}(M)$ is a left Serre tensor ideal of $\mathcal{C}$.
\end{lemma}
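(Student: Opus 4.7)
The plan is to verify the two defining properties in turn: first, that $\operatorname{Ann}_{\mathcal{C}}(M)$ is a Serre subcategory of $\mathcal{C}$, and second, that it absorbs the left tensor action of $\mathcal{C}$. The governing principle throughout is that, by the standing assumption recorded in the remark on module categories over multiring categories, the functor $(-)\otimes M\colon \mathcal{C}\to\mathcal{M}$ is additive and exact, so every short exact sequence in $\mathcal{C}$ is carried to a short exact sequence in $\mathcal{M}$.

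To start, I would dispose of the additive structure: $0\otimes M\cong 0$ since $(-)\otimes M$ is additive, and for $X_1,X_2\in\operatorname{Ann}_{\mathcal{C}}(M)$ the isomorphism $(X_1\oplus X_2)\otimes M\cong (X_1\otimes M)\oplus(X_2\otimes M)\cong 0$ shows closure under direct sums, so $\operatorname{Ann}_{\mathcal{C}}(M)$ is a full additive subcategory. For the Serre property, consider a short exact sequence
\begin{equation*}
0\To X_1\To X_2\To X_3\To 0
\end{equation*}
in $\mathcal{C}$; exactness of $(-)\otimes M$ yields the exact sequence
\begin{equation*}
0\To X_1\otimes M\To X_2\otimes M\To X_3\otimes M\To 0
\end{equation*}
in $\mathcal{M}$. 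If $X_2\in\operatorname{Ann}_{\mathcal{C}}(M)$, the middle term vanishes, forcing $X_1\otimes M=0$ and $X_3\otimes M=0$, which gives closure under subobjects and quotients. If instead $X_1,X_3\in\operatorname{Ann}_{\mathcal{C}}(M)$, the outer terms vanish and exactness forces $X_2\otimes M=0$, giving closure under extensions.

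For the ideal property, let $X\in\operatorname{Ann}_{\mathcal{C}}(M)$ and $Y\in\mathcal{C}$. Using the associativity isomorphism $m_{Y,X,M}$ of the module structure, together with the fact that $Y\otimes(-)\colon\mathcal{M}\to\mathcal{M}$ is an additive functor and therefore sends the zero object to zero, one obtains
\begin{equation*}
(Y\otimes X)\otimes M\;\cong\;Y\otimes (X\otimes M)\;\cong\;Y\otimes 0\;\cong\;0,
\end{equation*}
so $Y\otimes X\in\operatorname{Ann}_{\mathcal{C}}(M)$. This completes both requirements.

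There is no genuine obstacle here; the statement is a direct consequence of the exactness and biadditivity built into the notion of a module category over a multiring category, combined with the module-associativity isomorphism. The only point that deserves attention is to invoke exactness on both sides (in the $\mathcal{C}$-variable for the Serre conditions, and in the $\mathcal{M}$-variable for the ideal condition via $Y\otimes 0\cong 0$), rather than biexactness in $\mathcal{C}$ alone.
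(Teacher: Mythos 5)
Your proposal is correct and follows essentially the same route as the paper: apply the exact functor $(-)\otimes M$ to a short exact sequence in $\mathcal{C}$ to obtain the Serre conditions, then use the module-associativity isomorphism together with $Y\otimes 0\cong 0$ for the left-ideal property. The only difference is that you spell out the direct-sum closure and the case analysis more explicitly, and you flag the reliance on additivity in the $\mathcal{M}$-variable, which the paper leaves implicit.
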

\begin{proof}
We first take any short exact sequence $$0\rightarrow X\rightarrow Y\rightarrow Z\rightarrow 0$$ in $\mathcal{C}$ and apply the exact functor $-\otimes M$ to it. It can be shown that $Y\otimes M\cong 0$ if and only if $X\otimes M\cong Z\otimes M\cong0$. Consequently, $\operatorname{Ann}_{\mathcal{M}}(X)$ is a Serre subcategory. Moreover, for any $W\in\mathcal{C}, X\in \operatorname{Ann}_{\mathcal{C}}(M)$, we have $(W\otimes X) \otimes M\cong W\otimes (X\otimes M)\cong0,$ which implies that $\operatorname{Ann}_{\mathcal{C}}(M)$ is a left Serre tensor ideal.
 \end{proof}
Motivated by \cite[Definition 7.12.9]{EGNO15}, we adapt the definition to our setting as follows.
\begin{definition}
A module category $\mathcal{M}$ over a monoidal category $\mathcal{C}$ is said to be faithful if any nonzero object in $\mathcal{C}$ acts by a nonzero functor in $\mathcal{M}.$
\end{definition}

\begin{corollary}\label{coro:tensorfaithful}
Let $\mathcal{C}$ be a tensor category and $\mathcal{M}$ be a left $\mathcal{C}$-module category. Then $\mathcal{M}$ is faithful.
\end{corollary}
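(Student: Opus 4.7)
The plan is to show faithfulness via its contrapositive: if $X \in \mathcal{C}$ is nonzero, then there must exist some $M \in \mathcal{M}$ (assuming $\mathcal{M}$ contains a nonzero object, as is tacit) with $X \otimes M \not\cong 0$. Concretely, pick any nonzero $M \in \mathcal{M}$ and study $\operatorname{Ann}_\mathcal{C}(M)$. By Lemma \ref{lem:leftserre} this is a left Serre tensor ideal of $\mathcal{C}$. It suffices to prove that in a tensor category the only left Serre tensor ideals are $0$ and $\mathcal{C}$, because then $\mathbbm{1} \otimes M \cong M \neq 0$ forces $\mathbbm{1} \notin \operatorname{Ann}_\mathcal{C}(M)$, so $\operatorname{Ann}_\mathcal{C}(M) = 0$, and hence $X \otimes M \neq 0$ for every nonzero $X$.

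So the heart of the argument is proving triviality of left Serre tensor ideals in a tensor category $\mathcal{C}$. Let $\mathcal{I}$ be a nonzero such ideal and pick $0 \neq X \in \mathcal{I}$. Since $\mathcal{C}$ is rigid, $X$ has a left dual $X^*$ with evaluation $\operatorname{ev}_X \colon X^* \otimes X \to \mathbbm{1}$. I would first observe that $\operatorname{ev}_X$ is nonzero: if it vanished, one of the triangle (rigidity) identities would force $\id_X = 0$, contradicting $X \neq 0$. Next, since $\operatorname{End}_\mathcal{C}(\mathbbm{1}) \cong \k$, the unit $\mathbbm{1}$ is a simple object of $\mathcal{C}$; therefore the image of the nonzero map $\operatorname{ev}_X$ is a nonzero subobject of $\mathbbm{1}$ and so equals $\mathbbm{1}$, i.e.\ $\operatorname{ev}_X$ is an epimorphism.

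Now I use the closure properties of $\mathcal{I}$. Left-ideal closure applied to $X \in \mathcal{I}$ gives $X^* \otimes X \in \mathcal{I}$. Serre closure under quotients, applied to the epimorphism $\operatorname{ev}_X \colon X^* \otimes X \twoheadrightarrow \mathbbm{1}$, then yields $\mathbbm{1} \in \mathcal{I}$. Finally, for any $Y \in \mathcal{C}$ the unit constraint gives $Y \cong Y \otimes \mathbbm{1} \in \mathcal{I}$, so $\mathcal{I} = \mathcal{C}$. This completes the classification of left Serre tensor ideals and therefore the corollary.

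The only real obstacle is the step where one needs $\mathbbm{1}$ to appear as a quotient of $X^* \otimes X$ inside $\mathcal{I}$ rather than as a subobject of $X \otimes X^*$: the left-ideal hypothesis supplies $X^* \otimes X$ but not $X \otimes X^*$, so one must work with the evaluation morphism and Serre closure under quotients rather than with the coevaluation and closure under subobjects. This forces the particular choice of evaluation over coevaluation; once that choice is made the remaining steps are immediate from rigidity, simplicity of $\mathbbm{1}$, and the definition of a Serre tensor ideal.
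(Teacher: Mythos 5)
Your proposal is correct and follows essentially the same route as the paper: reduce to showing that the annihilator $\operatorname{Ann}_{\mathcal{C}}(M)$ of a nonzero $M$, being a left Serre tensor ideal by Lemma~\ref{lem:leftserre}, must be zero because any nonzero left Serre tensor ideal of a tensor category is the whole category. The paper delegates that last triviality statement to \cite[Proposition~3.11]{XZ25}, whereas you spell out the standard argument (nonvanishing of $\operatorname{ev}_X\colon X^*\otimes X\to\mathbbm{1}$ via the zigzag identity, simplicity of $\mathbbm{1}$, and closure of the ideal under left tensoring and quotients), so the content is the same and your version is merely more self-contained.
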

\begin{proof}
For any nonzero object $M\in\mathcal{M}$, it follows from Lemma \ref{lem:leftserre} that $\operatorname{Ann}_{\mathcal{C}}(M):=\{X\in\mathcal{C}\mid X\otimes M=0\}$ is a left Serre tensor ideal of $\mathcal{C}$. Following the argument in the proof of \cite[Proposition 3.11]{XZ25}, the proof is complete.
\end{proof}
We now turn to Serre subcategories of module categories over a multiring category.
\begin{definition}
Let $\mathcal{C}$ be a multiring category and $\mathcal{M}$ be a left $\mathcal{C}$-module category. A Serre subcategory $\mathcal{N}$ of $\mathcal{M}$ is called a Serre submodule category if $X\otimes N \in \mathcal{N}$ for any $X\in\mathcal{C}$ and $N\in\mathcal{N}$.
\end{definition}
\begin{example}\rm
Let $\mathcal{C}$ be a braided multiring category (\cite[Definition 8.1.1]{EGNO15}) and $\mathcal{M}$ be a left $\mathcal{C}$-module category. For any nonzero object $X\in \mathcal{C}$, let $\operatorname{Ann}_{\mathcal{M}}(X):=\{M\in\mathcal{M}\mid X\otimes M=0\}$ be the full subcategory of $\mathcal{M}$. A similar argument as in Lemma \ref{lem:leftserre} shows that $\operatorname{Ann}_{\mathcal{M}}(X)$ is a Serre submodule of $\mathcal{M}$.
\end{example}
Let $\mathcal{A}$ be an abelian category and $\mathcal{B}$ be its Serre subcategory. Recall that we can form the the \textit{Serre quotient} (\cite{Pop73}) of $\mathcal{A}$ by $\mathcal{B}$ by inverting each morphism
$f$ in $\mathcal{A}$ such that its kernel and cokernel belong to $\mathcal{B}$. The Serre quotient $\mathcal{A} /\mathcal{B}$ is also an abelian
category and the quotient functor $Q : \mathcal{A} \rightarrow\mathcal{A} /\mathcal{B}$ is an exact functor.

In \cite[Proposition 4.9]{ZL25}, it is shown that the Serre quotient $\mathcal{C} / \mathcal{I}$ of a multiring category $\mathcal{C}$ by a two-sided Serre tensor ideal $\mathcal{I}$ itself carries the structure of a multiring category. By an analogous argument, one can prove that the Serre quotient $\mathcal{M} / \mathcal{N}$ of a $\mathcal{C}$-module category $\mathcal{M}$ by a Serre submodule $\mathcal{N}$ also naturally inherits the structure of a $\mathcal{C}$-module category. Moreover, if for any $I\in \mathcal{I}$ and $M\in\mathcal{M}$, $I\otimes M\in\mathcal{N}$, then $\mathcal{M} / \mathcal{N}$ is a $\mathcal{C} / \mathcal{I}$-submodule.

\subsection{Triangulated module categories}
According to \cite{NVY22}, a \textit{monoidal triangulated category} $(\mathcal{T},\otimes,[1],\mathbbm{1})$ consists of a triangulated category $(\mathcal{T},[1])$, a monoidal structure $\otimes$, and a unit object $\mathbbm{1}$, where the bifunctor $\otimes$ is exact in each variable.
A \textit{monoidal triangulated functor} (\cite{NVY22}) is a functor that is both triangulated and monoidal.

\begin{example}\emph{(}\cite[Example 2.7]{XZ25}\emph{)}\rm
Given a multiring category $(\mathcal{C}, \otimes, \mathbbm{1})$, the bounded homotopy category $\mathbf{K}^{b}(\mathcal{C})$ admits a monoidal structure. The tensor product in $\mathbf{K}^{b}(\mathcal{C})$ is defined as follows:
for any complexes $X^\bullet=(X^n, d^n_X)_{n\in\mathbb{Z}}, Y^\bullet=(Y^m, d^m_Y)_{m\in\mathbb{Z}}\in \mathbf{K}^{b}(\mathcal{C})$,  $$(X^\bullet\widetilde{\otimes} Y^\bullet)^n:=\bigoplus_{i+j=n}X^i\otimes Y^{j}$$
with differential
$$d^n_{X^{\bullet}\widetilde{\otimes} Y^{\bullet}}:=\sum\limits_{i+j=n}(d^i_{X}\otimes \id_{Y^j} +(-1)^i\id_{X^i}\otimes d_Y^j).$$
Let $[1]$ denote the shift functor of $\mathbf{K}^{b}(\mathcal{C})$.
Then $(\mathbf{K}^{b}(\mathcal{C}), \widetilde{\otimes}, [1], \mathbbm{1}^{\bullet})$ forms a monoidal triangulated category, where $\mathbbm{1}^{\bullet}$ is the stalk complex with $\mathbbm{1}$ concentrated in degree $0$. According to \cite[Lemma 2.7.3]{Wei94}, the bounded derived category $(\mathbf{D}^{b}(\mathcal{C}), \widetilde{\otimes}, [1], \mathbbm{1}^{\bullet})$ with the inherited monoidal structure is also a monoidal triangulated category.
\end{example}
Inspired by \cite[Definitions 3.2 and 3.4]{Ste13}, we introduce the following definitions.
\begin{definition}
Let $(\mathcal{T},\otimes,\Sigma,\mathbbm{1})$ be a monoidal triangulated category.
\begin{itemize}
\item[(1)]A left triangulated module category over $\mathcal{T}$ is a triangulated category $\mathcal{K}$ which is equipped with a structure of a $\mathcal{T}$-module category, such that the module product bifunctor $\otimes :\mathcal{T}\times\mathcal{K}\rightarrow \mathcal{K}$ is exact in both factors.
\item[(2)]A triangulated $\mathcal{T}$-submodule $\mathcal{K}^\prime$ of a triangulated $\mathcal{T}$-module category $\mathcal{K}$ is a triangulated subcategory which is closed under the action of $\mathcal{T}.$
\end{itemize}
\end{definition}
Observe that in contrast to \cite[Definition 3.2]{Ste13}, our definition of a triangulated module category omits those further assumptions.

\begin{definition}
Let $\mathcal{K}_1, \mathcal{K}_2$ be two triangulated module categories over a monoidal triangulated category $\mathcal{T}$. A triangulated $\mathcal{T}$-module functor from $\mathcal{K}_1$ to $\mathcal{K}_2$ is a functor that is simultaneously a triangulated functor and a $\mathcal{T}$-module functor. A triangulated $\mathcal{T}$-module functor is said to be an equivalence of triangulated module categories if it is an equivalence of ordinary categories.
\end{definition}

\begin{example}\rm
Let $F:\mathcal{T}_1\rightarrow \mathcal{T}_2$ be a monoidal triangulated functor between monoidal triangulated categories $\mathcal{T}_1$ and $\mathcal{T}_2$. Then $\mathcal{T}_2$ has a structure of a triangulated module category over $\mathcal{T}_1$ with $X\otimes Y:=F(X)\otimes Y$ for any $X\in \mathcal{T}_1, Y\in\mathcal{T}_2.$ In particular, any monoidal triangulated category is a triangulated module category over itself.
\end{example}
Indeed, given a module category over a multiring category, one can construct a triangulated module category over a monoidal triangulated category.
\begin{example}\label{ex:exact}\rm
Let $\mathcal{C}$ be a non-semisimple finite multitensor category. Suppose $\mathcal{M}$ is an exact module category over $\mathcal{C}$, that is, for any projective object $P\in \mathcal{C}$ and any object $M\in \mathcal{M}$ the object $P\otimes M$ is projective in $\mathcal{M}$ (\cite[Definition 7.5.1]{EGNO15}). Since $\mathcal{C}$ itself is a Frobenius categroy by \cite[Remark 6.1.4]{EGNO15}, and the stable category of a Frobenius category is a triangulated category (\cite[Theorem 2.6]{Hap88}), then the stable category of $\mathcal{C}$ carries the structure of a monoidal triangulated category.
Note that by \cite[Lemma 7.6.1 and Corollary 7.6.4]{EGNO15}, $\mathcal{M}$ is also Frobenius; hence we may form its stable category $\underline{\mathcal{M}}$.
For any object $X\in \mathcal{C}$ and any projective object $Q\in\mathcal{M}$, the object $X\otimes Q$ is always projective in $\mathcal{M}$ (see \cite[Exercise 7.5.2]{EGNO15}). It is striaghtforward to show that the stable category $\underline{\mathcal{M}}$ is a triangulated module category over the stable category $\underline{\mathcal{C}}$.
\end{example}

\begin{example}\rm\label{ex:derived}
Let $\mathcal{C}$ be a multiring category and $\mathcal{M}$ a left $\mathcal{C}$-module category. The bounded complex category $\mathbf{C}^{b}(\mathcal{C})$ is clear a multiring category and it has a module action on the bounded complex category $\mathbf{C}^{b}(\mathcal{M})$: for any complexes $X^\bullet=(X^n, d^n_X)_{n\in\mathbb{Z}}\in \mathbf{C}^{b}(\mathcal{C})$ and $M^\bullet=(M^m, d^m_M)_{m\in\mathbb{Z}}\in\mathbf{C}^b(\mathcal{M})$,  $$(X^\bullet\widetilde{\otimes} M^\bullet)^n:=\bigoplus_{i+j=n}X^i\otimes M^{j}$$
with differential
$$d^n_{X^{\bullet}\widetilde{\otimes} M^{\bullet}}:=\sum\limits_{i+j=n}(d^i_{M}\otimes \id_{M^j} +(-1)^i\id_{X^i}\otimes d_M^j).$$
Since a direct computation shows that the tensor product $\tilde{\otimes}$ preserves null-homotopies, $\mathbf{K}^b(\mathcal{M})$ becomes a triangulated module category over $\mathbf{K}^b(\mathcal{C})$. Moreover, $\tilde{\otimes}$ also preserves quasi-isomorphisms by an argument similar to the proof of \cite[Lemma 2.7.3]{Wei94}. Consequently, the structure descends to make $\mathbf{D}^b(\mathcal{M})$ a triangulated module category over $\mathbf{D}^b(\mathcal{C})$.
\end{example}

\begin{remark}\rm
In \cite{Kho16}, Khovanov introduced the notion of Hopfological algebra. Subsequently, Qi developed its homological theory \cite{Qi14}, drawing an analogy with the classical theory of ordinary differential graded algebras. These frameworks can be applied to areas such as the categorification of link invariants as well as their representation (see, for example, \cite{EQ16, QS16}). In what follows, we briefly review their results. Let $H$ be a finite-dimensional Hopf algebra over $\k$ and $A$ be a left $H$-comodule algebra. Denote $\mathcal{C}(A, H)$ by the quotient of $A\text{-}\mathrm{mod}$ by the ideal of morphisms that factor through an $A\text{-}\mathrm{mod}$ of the form $H\otimes N$ for some $N\in A\text{-}\mathrm{mod}$. By \cite[Theorem 1]{Kho16} and \cite[Proposition 2.6]{Qi23}, the category $\mathcal{C}(A, H)$ is triangulated and also carries a module structure over the stable category $H\text{-}\underline{\mathrm{mod}}$, with a module product bifunctor that is exact in the second variable. They also employed the notion of a triangulated module category and regarded $\mathcal{C}(A, H)$ as a triangulated module category over $H\text{-}\underline{\mathrm{mod}}$. However, compared to our definition, theirs lacks the condition of exactness in the first component. If, in addition, $A\text{-}\mathrm{mod}$ is an exact module category over $H\text{-}\mathrm{mod}$ (see Example \ref{ex:exact}), then it follows from \cite[Theorem 2.9]{Qi23} that the bifunctor is also exact in the first variable. It is worth noting that the classification of indecomposable exact module categories over $H\text{-}\mathrm{mod}$ was established in \cite{AM07}.
\end{remark}

\begin{lemma}\label{lem:DF}
Let $\mathcal{M}_1, \mathcal{M}_2$ be two module categories over a multiring category $\mathcal{C}$. If $F:\mathcal{M}_1\rightarrow \mathcal{M}_2$ is an exact $\mathcal{C}$-module functor, then the derived functor $\mathbf{D}^b(F)$ of $F$ is a $\mathbf{D}^b(\mathcal{C})$-module functor. In particular, if $F$ is an equivalence, then $\mathbf{D}^b(F)$ is an equivalence.
\end{lemma}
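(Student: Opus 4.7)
The plan is to lift the module-functor structure of $F$ to the bounded derived level componentwise, and then check that the resulting termwise datum is a well-defined chain map compatible with the monoidal isomorphisms of the complex tensor product defined in Example \ref{ex:derived}.

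First, since $F$ is exact and additive, it extends degreewise to an exact functor $\mathbf{C}^b(F):\mathbf{C}^b(\mathcal{M}_1)\to\mathbf{C}^b(\mathcal{M}_2)$, which descends through quasi-isomorphisms to $\mathbf{D}^b(F):\mathbf{D}^b(\mathcal{M}_1)\to\mathbf{D}^b(\mathcal{M}_2)$, and this is a triangulated functor by a standard argument. Next, I would construct the module coherence isomorphism. Given $X^\bullet=(X^i,d_X^i)\in\mathbf{D}^b(\mathcal{C})$ and $M^\bullet=(M^j,d_M^j)\in\mathbf{D}^b(\mathcal{M}_1)$, define
$$
(s^\bullet_{X^\bullet, M^\bullet})^n \;:=\; \bigoplus_{i+j=n} s_{X^i,M^j}\colon \bigoplus_{i+j=n} F(X^i\otimes M^j)\longrightarrow \bigoplus_{i+j=n} X^i\otimes F(M^j),
$$
where $s$ is the coherence isomorphism attached to the $\mathcal{C}$-module functor $F$. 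Each summand is an isomorphism, and $F$ commutes with finite direct sums, so $(s^\bullet)^n$ is a componentwise isomorphism in $\mathcal{M}_2$.

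The key verification is that $s^\bullet$ is a chain map, i.e.\ it intertwines the differentials of $\mathbf{D}^b(F)(X^\bullet\tilde\otimes M^\bullet)$ and $X^\bullet\tilde\otimes \mathbf{D}^b(F)(M^\bullet)$. On the summand $X^i\otimes M^j$, the differential after applying $F$ is
$$
F(d_X^i\otimes \id_{M^j}) + (-1)^i F(\id_{X^i}\otimes d_M^j),
$$
while on the right we have $d_X^i\otimes \id_{F(M^j)} + (-1)^i \id_{X^i}\otimes F(d_M^j)$. Using naturality of $s$ in its first argument against $d_X^i$, and naturality of $s$ in its second argument against $d_M^j$, I would check that each of the two pieces individually commutes with $s^\bullet$; the signs match because they come from the same rule $(-1)^i$ on both sides and are preserved by the additive functor $F$. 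This shows $s^\bullet$ is an isomorphism in $\mathbf{C}^b(\mathcal{M}_2)$ and is natural in both $X^\bullet$ and $M^\bullet$; hence it descends to $\mathbf{D}^b(\mathcal{M}_2)$.

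It remains to verify the associativity pentagon and the unit triangle of Definition of a module functor for the pair $(\mathbf{D}^b(F),s^\bullet)$. These are identities of chain maps between bounded complexes, so it suffices to check them in each degree $n$; degreewise they reduce to a finite direct sum of the corresponding diagrams for $F$ applied to the components, which commute by hypothesis. For the final claim, if $F$ is an equivalence, then, being exact, its quasi-inverse $G$ is also exact (and inherits a $\mathcal{C}$-module structure), and $\mathbf{D}^b(G)$ is a quasi-inverse of $\mathbf{D}^b(F)$ in the usual way. The main obstacle is simply bookkeeping of signs and the direct-sum decomposition when checking that $s^\bullet$ commutes with the Koszul-sign differentials on the two sides; aside from this, everything is forced by naturality and the corresponding axioms at the underlying abelian level.
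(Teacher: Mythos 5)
Your proposal is correct and follows essentially the same approach as the paper: both build the module-coherence isomorphism degreewise via $\bigoplus_{i+j=n} s_{X^i,M^j}$ on the components of $X^\bullet\tilde\otimes M^\bullet$, using exactness of $F$ to pass to the derived level and a quasi-inverse of $F$ for the final claim. You simply spell out the chain-map, naturality, and coherence checks that the paper leaves implicit.
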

\begin{proof}
Since $F$ is exact, it induces a triangulated functor $\mathbf{D}^b(F): \mathbf{D}^b(\mathcal{M}_1) \rightarrow \mathbf{D}^b(\mathcal{M}_2)$. Explicitly, it sends a complex $(M^m, d^m_M)_{m \in \mathbb{Z}}$ to the complex $(F(M^m), F(d^m_M))_{m \in \mathbb{Z}}$. For any $X^\bullet \in\mathbf{D}^b(\mathcal{C})$ and $M^\bullet\in \mathbf{D}^b(\mathcal{M}_1),$ we have
\begin{equation*}
(\mathbf{D}^b(F)(X^\bullet\tilde{\otimes} M^\bullet))^n=   F(\coprod_{p+q=n} X^p\otimes M^q)
\cong
\coprod_{p+q=n} X^p\otimes F(M^q)=
    (X^\bullet\tilde{\otimes} \mathbf{D}^b(F)(M^\bullet))^n
\end{equation*}
for any $n\in\mathbb{Z}.$ This yields an isomorphism $\mathbf{D}^b(X^\bullet\tilde{\otimes} M^\bullet)\cong X^\bullet \tilde{\otimes} \mathbf{D}^b(F)(M^\bullet)$, which holds naturally in both variables. Thus $\mathbf{D}^b(F)$ is a triangulated $\mathbf{D}^b(\mathcal{C})$-module functor. If $F$ is an equivalence, then any quasi-inverse of $F$ induces a quasi-inverse for $\mathbf{D}^b(F)$; hence $\mathbf{D}^b(F)$ is also an equivalence.
\end{proof}
Recall that a \textit{thick subcategory}
(\cite[Section 1]{Ric89}) of a triangulated category is a triangulated subcategory closed under taking direct summands.
We now present the corresponding definitions in the context of monoidal triangulated categories and triangulated module categories.
\begin{definition}\emph{(}\cite[Subsection 1.2]{NVY22}, \cite[Definition 3.4]{Ste13}\emph{)}
Let $\mathcal{T}$ be a monoidal triangulated category and $\mathcal{K}$ be a triangulated module category over $\mathcal{T}$.
\begin{itemize}
\item[(1)]A thick subcategory $\mathcal{J}$ of $\mathcal{T}$ is said to be a two-sided thick ideal if it satisfies the ideal condition: for each $J\in \mathcal{J}$ and $X\in\mathcal{T}$, $J\otimes X, X\otimes J\in \mathcal{J}$.
\item[(2)]A thick subcategory $\mathcal{L}$ of $\mathcal{K}$ is called a thick $\mathcal{T}$-submodule category provided that $X\otimes L\in \mathcal{L}$ for any $X\in\mathcal{T}$ and $L\in\mathcal{L}$.
\end{itemize}
\end{definition}

\begin{remark}\rm
It was proved by Verdier \cite{Ver77} that there exists a bijection between the thick subcategories and the saturated compatible multiplicative systems in any triangulated category. In fact, this result also admits a generalization to the setting of monoidal triangulated categories and triangulated module categories. More specifically, there is a one-to-one correspondence between two-sided thick ideals (resp. thick submodule categories) and saturated compatible multiplicative systems that are closed under taking tensor product (resp. closed under tensoring with the identity morphisms in the monoidal triangulated category) in a monoidal triangulated category (resp. triangulated module category).
\end{remark}

For a triangulated category $\mathcal{K}$ and its thick subcategory $\mathcal{E}$, we can define the \textit{Verdier quotient} $\mathcal{K}/\mathcal{E}$, which
is the localization of $\mathcal{K}$ by inverting all morphisms $f$ in $\mathcal{K}$ whose cones lie in $\mathcal{E}$. The Verdier quotient $\mathcal{K}/\mathcal{E}$ is still a triangulated category, and the quotient functor $\mathcal{K}\rightarrow \mathcal{K}/\mathcal{E}$ is a triangulated functor. See \cite[Chapter 2]{Nee01} for details.

Let $\mathcal{T}$ be a monoidal triangulated category and $\mathcal{K}$ be a triangulated module category over $\mathcal{T}$.
It is straightforward to show that the Verdier quotient $\mathcal{T}/\mathcal{J}$ (resp. $\mathcal{K}/\mathcal{L}$) of $\mathcal{T}$ (resp. $\mathcal{K}$) with respect to its two-sided thick ideal $\mathcal{J}$ (resp. thick $\mathcal{T}$-submodule category  $\mathcal{L}$) is still a monoidal triangulated category (resp. triangulated module category over $\mathcal{T}$).

\begin{lemma}\label{lem:k1/ker=k2}
Let $\mathcal{C}$ be a multiring category and $\mathcal{M}_1, \mathcal{M}_2$ be two module categories over $\mathcal{C}$.
Suppose that $F: \mathbf{D}^{b}(\mathcal{M}_1)\rightarrow \mathbf{D}^{b}(\mathcal{M}_2)$ is a triangulated $\mathbf{D}^{b}(\mathcal{C})$-module functor.
\begin{itemize}
\item [(1)]Then $\ker(F)$ is a thick $\mathbf{D}^{b}(\mathcal{C})$-submodule category.
 \item[(2)]If $F$ is full and dense, then we have $\mathbf{D}^{b}(\mathcal{M}_1)/\ker(F)$ is equivalent to $\mathbf{D}^{b}(\mathcal{M}_2)$ as triangulated $\mathbf{D}^{b}(\mathcal{C})$-modules categories.
\end{itemize}
\end{lemma}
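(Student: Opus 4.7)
The plan is to treat (1) by direct verification of the thickness and ideal axioms, and to derive (2) from the universal property of the Verdier quotient combined with a standard Verdier-calculus argument.

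For (1), I would first check that $\ker(F)$ is a thick subcategory of $\mathbf{D}^b(\mathcal{M}_1)$: closure under the shift is immediate from $F\circ[1]\cong[1]\circ F$; closure under extensions follows by applying $F$ to a distinguished triangle $X\to Y\to Z\to X[1]$ with $X,Z\in\ker(F)$ and noting that the outer terms of the resulting triangle vanish, which forces $F(Y)\cong 0$; closure under direct summands follows from additivity of $F$. For the $\mathbf{D}^b(\mathcal{C})$-ideal property, the module-functor natural isomorphism $F(X^\bullet\widetilde{\otimes} L)\cong X^\bullet\widetilde{\otimes} F(L)$ immediately yields $X^\bullet\widetilde{\otimes} L\in\ker(F)$ whenever $L\in\ker(F)$.

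For (2), part (1) together with the remark preceding the lemma makes $\mathbf{D}^b(\mathcal{M}_1)/\ker(F)$ a triangulated $\mathbf{D}^b(\mathcal{C})$-module category and the projection $Q$ a triangulated $\mathbf{D}^b(\mathcal{C})$-module functor. The universal property of the Verdier quotient produces a unique triangulated functor $\bar{F}:\mathbf{D}^b(\mathcal{M}_1)/\ker(F)\to\mathbf{D}^b(\mathcal{M}_2)$ with $\bar{F}\circ Q=F$, and the module-functor isomorphism for $F$ descends through $Q$ to equip $\bar{F}$ with a module-functor structure. Density of $F$ gives essential surjectivity of $\bar{F}$, and fullness of $F$ together with fullness of $Q$ gives fullness of $\bar{F}$.

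The main obstacle is the faithfulness of $\bar{F}$, which I would establish by the following Verdier-calculus argument. Represent a morphism in the quotient by a roof $X\xleftarrow{s}X'\xrightarrow{f}B$ with $\operatorname{cone}(s)\in\ker(F)$, and suppose $\bar{F}$ sends it to zero; then $F(f)=0$. Complete $f$ to a distinguished triangle $X'\xrightarrow{f}B\xrightarrow{g}C\xrightarrow{h}X'[1]$; applying $F$ gives a split triangle, so $F(h)$ is a split epimorphism. Choose a section $\sigma$ of $F(h)$, lift it by fullness of $F$ to $\tilde{\sigma}:X'[1]\to C$, and set $u:=h[-1]\circ\tilde{\sigma}[-1]:X'\to X'$. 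Then $F(u)=\id_{F(X')}$, so $\operatorname{cone}(u)\in\ker(F)$, while $f\circ u=0$ since $f\circ h[-1]=0$ by the triangle axiom. This exhibits the roof as zero in the Verdier quotient, proving faithfulness; hence $\bar{F}$ is an equivalence of triangulated $\mathbf{D}^b(\mathcal{C})$-module categories.
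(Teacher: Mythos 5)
Your proof of (1) matches the paper's line of reasoning exactly: $\ker(F)$ is thick because $F$ is triangulated, and the module-functor isomorphism $F(X^\bullet\widetilde{\otimes}L)\cong X^\bullet\widetilde{\otimes}F(L)$ gives the ideal condition. For (2), the overall strategy is also the same as the paper's — invoke the universal property of the Verdier quotient to produce $\overline{F}$ with $\overline{F}Q=F$ and then show $\overline{F}$ is an equivalence — but where the paper simply cites Miyachi's Lemma 3.1 for the fact that a full and dense triangulated functor induces an equivalence on the quotient by its kernel, you unpack that black box and verify faithfulness directly by a roof argument: given a roof $(s,f)$ with $F(f)=0$, you produce $u\colon X'\to X'$ with $\operatorname{cone}(u)\in\ker(F)$ and $fu=0$, so the roof vanishes in the quotient. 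The calculation is correct (using that the octahedral axiom keeps $\operatorname{cone}(su)$ in the thick subcategory is implicitly needed, and holds), so your argument is a valid self-contained substitute for the citation. One small wording issue: you invoke ``fullness of $Q$'' in deducing that $\overline{F}$ is full, but $Q$ need not be full; what one actually uses is that $Q$ is surjective on objects (so every morphism between objects of the quotient is, up to isomorphism, a morphism $F(X)\to F(Y)$, which lifts through $F$ by its fullness). The conclusion is unaffected.
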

\begin{proof}
\begin{itemize}
\item[(1)]Since $F$ is a triangulated functor, it follows that $\ker(F)$ is a thick subcategory. Moreover, for any $M\in\ker(F)$ and $X\in \mathbf{D}^{b}(\mathcal{C})$, we have $$F(X\otimes M)\cong X\otimes F(M)\cong0,$$ which means that $\ker(F)$ is a thick $\mathbf{D}^{b}(\mathcal{C})$-submodule category.
\item[(2)]Given the Verdier quotient functor $Q:\mathbf{D}^{b}(\mathcal{M}_1) \rightarrow \mathbf{D}^{b}(\mathcal{M}_1)/\ker(F)$, its universal property (\cite[Theorem 2.1.8]{Nee01}) guarantees the existence of a triangulated $\mathbf{D}^{b}(\mathcal{C})$-module functor $\overline{F}:\mathbf{D}^{b}(\mathcal{M}_1)/\ker(F) \rightarrow \mathbf{D}^{b}(\mathcal{M}_2)$ such that the following diagram
\begin{displaymath}
\xymatrix{
\mathbf{D}^{b}(\mathcal{M}_1)\ar[d]^{Q}\ar[rr]^F&&\mathbf{D}^{b}(\mathcal{M}_2)\\
\mathbf{D}^{b}(\mathcal{M}_1)/\operatorname{ker}(F)\ar[urr]_{\overline{F}}
}
\end{displaymath}
commutes. Since $\overline{F}Q=F$, it follows that $\overline{F}$ is full and dense.
Using \cite[Lemma 3.1]{Miy91}, one can show that $\overline{F}$ is a triangulated $\mathbf{D}^{b}(\mathcal{C})$-modules equivalence.
\end{itemize}
\end{proof}

\begin{remark}\rm\label{rm:Serre=thick}
Let $\mathcal{C}$ be a multiring category and $\mathcal{M}$ be a left $\mathcal{C}$-module category. Suppose $\mathcal{N}$ is a Serre submodule category of $\mathcal{M}$. Using \cite[Theorem 3.2]{Miy91}, we can show that $$ \mathbf{D}^{b}(\mathcal{M})/\mathbf{D}^{b}_{\mathcal{N}}(\mathcal{M})\approx \mathbf{D}^{b}(\mathcal{M}/\mathcal{N})$$ as triangulated module categories over $\mathbf{D}^b(\mathcal{C})$, where $\mathbf{D}^{b}_{\mathcal{N}}(\mathcal{M})$ is a full subcategory of $\mathbf{D}^{b}(\mathcal{M})$ generated by complexes of which all homologies are in $\mathcal{N}.$ A corresponding version of the statement holds for monoidal triangulated categories.
\end{remark}
It is now straightforward to prove the following lemma.
\begin{lemma}\label{lem:IL=L}
Let $\mathcal{T}$ be a monoidal triangulated category and $\mathcal{K}$ be a triangulated module category over $\mathcal{T}$. Let $\mathcal{J}$ be a two-sided thick ideal of $\mathcal{T}$ and $\mathcal{L}$ a thick $\mathcal{T}$-submodule category of $\mathcal{K}$. If $J\otimes M\in\mathcal{L}$ for all $J\in \mathcal{J}$ and $M\in\mathcal{K}$, then $\mathcal{K}/\mathcal{L}$ is a triangulated module category over $\mathcal{T}/\mathcal{J}$.
\end{lemma}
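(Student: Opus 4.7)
The plan is to apply the universal property of Verdier quotients to the module product bifunctor $\otimes:\mathcal{T}\times\mathcal{K}\to\mathcal{K}$, showing that after composing with the quotient $Q_{\mathcal{L}}:\mathcal{K}\to\mathcal{K}/\mathcal{L}$ it descends to a bifunctor $\overline{\otimes}:\mathcal{T}/\mathcal{J}\times\mathcal{K}/\mathcal{L}\to\mathcal{K}/\mathcal{L}$ that is exact in each variable and is compatible with the module coherence data.

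First, for a fixed $M\in\mathcal{K}$ I would consider the triangulated functor $-\otimes M:\mathcal{T}\to\mathcal{K}/\mathcal{L}$. By the hypothesis $J\otimes M\in\mathcal{L}$ for every $J\in\mathcal{J}$, this functor annihilates $\mathcal{J}$, so the universal property of the Verdier quotient (see \cite[Chapter 2]{Nee01}) yields a unique triangulated factorization through $\mathcal{T}/\mathcal{J}$. Equivalently, every morphism $s$ in $\mathcal{T}$ with $\operatorname{cone}(s)\in\mathcal{J}$ becomes invertible after tensoring with $M$, since $\operatorname{cone}(s\otimes\id_M)\cong\operatorname{cone}(s)\otimes M\in\mathcal{L}$. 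Symmetrically, for a fixed $X\in\mathcal{T}$ the functor $X\otimes-:\mathcal{K}\to\mathcal{K}/\mathcal{L}$ kills $\mathcal{L}$ because $\mathcal{L}$ is a thick $\mathcal{T}$-submodule, hence factors uniquely through $\mathcal{K}/\mathcal{L}$.

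Next, I would assemble these two one-variable factorizations into a single bifunctor via the calculus of fractions. A morphism in either quotient is represented by a roof, and the previous step guarantees that the formula
\[
\bigl(X\xleftarrow{s}\tilde X\to X'\bigr)\,\overline{\otimes}\,\bigl(M\xleftarrow{t}\tilde M\to M'\bigr)=\bigl(X\otimes M\xleftarrow{s\otimes t}\tilde X\otimes\tilde M\to X'\otimes M'\bigr)
\]
is well-defined on equivalence classes and functorial. Exactness of $\overline{\otimes}$ in each variable follows from exactness of $\otimes$ together with the fact that $Q_{\mathcal{J}}$ and $Q_{\mathcal{L}}$ are triangulated. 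The associativity and unit natural isomorphisms $m_{X,Y,M}$ and $l_M$ descend from $\mathcal{K}$ to $\mathcal{K}/\mathcal{L}$ because they are already natural on representatives, and the pentagon and triangle axioms descend automatically from those in $\mathcal{K}$.

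I expect the main obstacle to be bookkeeping rather than conceptual: one must verify that $\overline{\otimes}$ respects both multiplicative systems \emph{simultaneously}, which reduces to showing that $\operatorname{cone}(s\otimes t)\in\mathcal{L}$ whenever $\operatorname{cone}(s)\in\mathcal{J}$ or $\operatorname{cone}(t)\in\mathcal{L}$. This in turn follows from the octahedral axiom applied to the decomposition $s\otimes t=(s\otimes\id)\circ(\id\otimes t)$, combined with the hypothesis $\mathcal{J}\otimes\mathcal{K}\subseteq\mathcal{L}$ and the closure of $\mathcal{L}$ under the $\mathcal{T}$-action.
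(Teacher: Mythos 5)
The paper states this lemma without proof, remarking only that it ``is now straightforward,'' so there is no official argument to compare against; your proposal supplies exactly the routine argument the authors have in mind. The core of the descent is correct: for fixed $M$ the composite $Q_{\mathcal{L}}\circ(-\otimes M)$ kills $\mathcal{J}$ by the hypothesis $\mathcal{J}\otimes\mathcal{K}\subseteq\mathcal{L}$, for fixed $X$ the composite $Q_{\mathcal{L}}\circ(X\otimes-)$ kills $\mathcal{L}$ because $\mathcal{L}$ is a thick $\mathcal{T}$-submodule, and the bifunctor on roofs is well defined because $s\otimes t=(s\otimes\operatorname{id})\circ(\operatorname{id}\otimes t)$ has each factor with cone in $\mathcal{L}$, whence so does the composite (octahedral axiom plus closure of $\mathcal{L}$ under extensions). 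One small imprecision: in your last paragraph you write that one needs $\operatorname{cone}(s\otimes t)\in\mathcal{L}$ whenever $\operatorname{cone}(s)\in\mathcal{J}$ \emph{or} $\operatorname{cone}(t)\in\mathcal{L}$; as stated that is false in general (e.g.\ $\operatorname{cone}(s)\in\mathcal{J}$ with $t$ arbitrary gives a factor $\operatorname{id}\otimes t$ whose cone need not lie in $\mathcal{L}$). What one actually needs, and what your decomposition proves, is the conjunctive version: if $\operatorname{cone}(s)\in\mathcal{J}$ \emph{and} $\operatorname{cone}(t)\in\mathcal{L}$ then $\operatorname{cone}(s\otimes t)\in\mathcal{L}$. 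With that reading your proof is complete and is the natural argument.
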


\begin{example}\rm
Let $\mathcal{C}$ be a finite multitensor category and $\mathcal{M}$ be a exact left $\mathcal{C}$-module category. By Example \ref{ex:exact}, one can show that $\mathbf{K}^b(\mathcal{P}_\mathcal{M})$ is a thick $\mathbf{D}^b(\mathcal{C})$-submodule category of $\mathbf{D}^b(\mathcal{M})$ and $\mathbf{K}^b(\mathcal{P}_\mathcal{C})$ is a two-sided thick ideal of $\mathbf{D}^b(\mathcal{C})$, where $\mathcal{P}_{\mathcal{M}}$ and $\mathcal{P}_{\mathcal{N}}$ denote the full subcategories of $\mathcal{M}$ and $\mathcal{N}$, respectively, consisting of projective objects. Moreover, it follows from Lemma \ref{lem:IL=L} that $\mathbf{D}^b(\mathcal{M})/\mathbf{K}^b(\mathcal{P_\mathcal{M}})$ is a triangulated module category over $\mathbf{D}^b(\mathcal{C})/\mathbf{K}^b(\mathcal{P_\mathcal{C}})$.
\end{example}

\section{t-sturctures on triangulated module categories}\label{section3}
In this section, we present the theory of t-structures on triangulated module categories, with a focus on compatible t-structures and triangulated module t-structures, and investigate their properties.
\subsection{Preliminaries on t-structures}
Let $(\mathcal{K}, [1])$ be a triangulated category. Recall from \cite{BBD82} that
a \textit{t-structure} $\mathbbm{t}=(\mathcal{K}^{\leq 0}, \mathcal{K}^{\geq 0})$ on $\mathcal{K}$ is a pair of full subcategories satisfying the following conditions:
      \begin{itemize}
        \item [(i)]$\mathcal{K}^{\leq 0} \subseteq \mathcal{K}^{\leq 1}$ and $\mathcal{K}^{\geq 1}\subseteq \mathcal{K}^{\geq 0}$ where we use notation $\mathcal{K}^{\leq n}=\mathcal{K}^{\leq 0}[-n]$ and $\mathcal{K}^{\geq n}=\mathcal{K}^{\geq 0}[-n]$;
        \item [(ii)]If $U\in \mathcal{K}^{\leq 0}$ and $V\in \mathcal{K}^{\geq 1}$, then $\operatorname{Hom}_{\mathcal{K}}(U, V)=0;$
        \item [(iii)]For any object $X\in\mathcal{K}$, there is a distinguished triangle
        $$U \rightarrow X\rightarrow V \rightarrow U[1]$$
        with $U\in \mathcal{K}^{\leq 0}$ and $V\in\mathcal{K}^{\geq 1}$.
      \end{itemize}

Let $\mathbbm{t}=(\mathcal{K}^{\leq 0}, \mathcal{K}^{\geq 0})$ be a t-structure on a triangulated category $\mathcal{K}$. We observe that $(\mathcal{K}^{\leq n}, \mathcal{K}^{\geq n})$ is still a t-stucture for any $n\in\mathbb{Z}$, which will be denoted by $\mathbbm{t}[-n]$.

The \textit{heart} of the t-structure $\mathbbm{t}$ is the full subcategory
  $\mathcal{K}^{\leq 0} \cap \mathcal{K}^{\geq 0}$, which is denoted by $\mathcal{H}_\mathbbm{t}$. As is well-known, $\mathcal{H}_\mathbbm{t}$ is an abelian category, see \cite[Theorem 1.3.6]{BBD82}.
A t-structure $(\mathcal{K}^{\leq 0}, \mathcal{K}^{\geq 0})$ is said to be \textit{bounded} if $$\bigcup\limits_{n\in\mathbb{Z}}\mathcal{K}^{\leq n}=\bigcup\limits_{n\in\mathbb{Z}}\mathcal{K}^{\geq n}=\mathcal{K}.$$

Denote by $\tau_{\leq n}:\mathcal{K}\rightarrow \mathcal{K}^{\leq n}$ the right adjoint of the inclusion $\mathcal{K}^{\leq n}\hookrightarrow \mathcal{K}$, and by $\tau_{\geq n}:\mathcal{K}\rightarrow \mathcal{K}^{\geq n}$ the left adjoint of the inclusion $\mathcal{K}^{\geq n}\hookrightarrow \mathcal{K}$.
They are called the \textit{truncation functors} associated to the t-structure $\mathbbm{t}$. For each object $X\in\mathcal{K}$, there is a canonical distinguished triangle
$$
\tau_{\leq n}X\rightarrow X\rightarrow \tau_{\geq n+1} X\rightarrow (\tau_{\leq n}X)[1].
$$
The composition $H^0_t=\tau_{\leq 0}\tau_{\geq 0}:\mathcal{K} \rightarrow \mathcal{H}_\mathbbm{t}$ is called the \textit{cohomological functor} associated to $\mathbbm{t}$. More generally, we set $H^n_\mathbbm{t}(X)=H^0(X[n])$, which is canonically isomorphic to $(\tau_{\leq n}\tau_{\geq n} X)[n]$. For further details,  we refer to \cite[Section 1.3]{BBD82}.

\subsection{K\"{u}nneth formula for triangulated module categories}
In this subsection, let $\mathcal{T}$ be a monoidal triangulated category equipped with a  t-structure $\mathbbm{t}$, and let $\mathcal{K}$ be a triangulated module category over $\mathcal{T}$, endowed with a t-structure $\mathbbm{t}^\prime$.

Recall that the tensor product on $\mathcal{T}$ is said to be \textit{compatible} with $\mathbbm{t}$ if $\mathcal{H}_\mathbbm{t}\otimes \mathcal{H}_\mathbbm{t}\subseteq \mathcal{H}_\mathbbm{t}$ (\cite[Definition 3.2]{Big07}). Next, we extend this definition to the setting of triangulated module categories.
\begin{definition}
Let $\mathcal{T}$ be a monoidal triangulated category equipped with a compatible t-structure $\mathbbm{t}$, and let $\mathcal{K}$ be a triangulated module category over $\mathcal{T}$, endowed with a t-structure $\mathbbm{t}^\prime$. The module product bifunctor is said to be compatible with the t-structures $\mathbbm{t}$ and $\mathbbm{t}^\prime$ if $\mathcal{H}_\mathbbm{t}\otimes \mathcal{H}_{\mathbbm{t}^\prime}\subseteq \mathcal{H}_{\mathbbm{t}^\prime}.$
\end{definition}

In the following part, we denote the truncation functors on $\mathcal{T}$ by $\tau_{\leq n}, \tau_{\geq n}$, and those on $\mathcal{K}$ by $\tau^\prime_{\leq n}, \tau^\prime_{\geq n}$.
Following exactly the same method as in \cite[Lemma 3.4]{Big07}, we can prove the following lemma in a completely analogous manner.
\begin{lemma}\label{lem:tauprime=0}
Let $\mathcal{T}$ be a monoidal triangulated category equipped with a compatible bounded t-structure $\mathbbm{t}$, and let $\mathcal{K}$ be a triangulated module category over $\mathcal{T}$, endowed with a bounded t-structure $\mathbbm{t}^\prime$. Then the following assertions are equivalent.
\begin{itemize}
\item[(1)]The module product bifunctor is compatible with the t-structures $\mathbbm{t}$ and $\mathbbm{t}^\prime$, that is, $\mathcal{H}_\mathbbm{t}\otimes \mathcal{H}_{\mathbbm{t}^\prime}\subseteq \mathcal{H}_{\mathbbm{t}^\prime}.$
\item[(2)]For any $X\in\mathcal{T}, M\in\mathcal{K}$ and integers $n,m$, we have $\tau^\prime_{\geq n+m+1}(\tau_{\leq n}X\otimes \tau^\prime_{\leq m}M)=0$ and $\tau^\prime_{\leq n+m-1}(\tau_{\geq n }X\otimes \tau^\prime_{\geq m}M)=0$.
\end{itemize}
\end{lemma}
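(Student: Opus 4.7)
My plan is to prove the two implications separately, exploiting boundedness to reduce the harder direction to an induction on the length of the cohomological support.

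For $(2) \Rightarrow (1)$, the argument is essentially immediate. Given $Y \in \mathcal{H}_\mathbbm{t}$ and $N \in \mathcal{H}_{\mathbbm{t}^\prime}$, one has $Y = \tau_{\leq 0}Y = \tau_{\geq 0}Y$ and $N = \tau^\prime_{\leq 0}N = \tau^\prime_{\geq 0}N$, so specializing hypothesis $(2)$ to $n = m = 0$ yields both $\tau^\prime_{\geq 1}(Y \otimes N) = 0$ and $\tau^\prime_{\leq -1}(Y \otimes N) = 0$, placing $Y \otimes N$ inside $\mathcal{H}_{\mathbbm{t}^\prime} = \mathcal{K}^{\leq 0} \cap \mathcal{K}^{\geq 0}$, as required.

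For $(1) \Rightarrow (2)$, I would first establish two auxiliary inclusions: whenever $X \in \mathcal{T}^{\leq n}$ and $M \in \mathcal{K}^{\leq m}$ we have $X \otimes M \in \mathcal{K}^{\leq n+m}$, and symmetrically $\mathcal{T}^{\geq n} \otimes \mathcal{K}^{\geq m} \subseteq \mathcal{K}^{\geq n+m}$. Given these, the two vanishing statements in $(2)$ follow by specializing respectively to $\tau_{\leq n}X,\ \tau^\prime_{\leq m}M$ and to $\tau_{\geq n}X,\ \tau^\prime_{\geq m}M$. To prove the first inclusion, I would induct on the number of nonzero cohomologies of $M$, which is finite by boundedness of $\mathbbm{t}^\prime$. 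In the base case where $M$ is concentrated in a single degree, $M \cong N[-l]$ with $N \in \mathcal{H}_{\mathbbm{t}^\prime}$ and $l \leq m$; a nested induction on the cohomological length of $X$ reduces to $X \cong Y[-k]$ with $Y \in \mathcal{H}_\mathbbm{t}$ and $k \leq n$, and then $X \otimes M \cong (Y \otimes N)[-k-l]$ lies in $\mathcal{H}_{\mathbbm{t}^\prime}[-k-l] \subseteq \mathcal{K}^{\leq n+m}$ by hypothesis $(1)$. For the inductive step, I would choose the largest $l$ with $H^l_{\mathbbm{t}^\prime}(M) \neq 0$ (so $l \leq m$), tensor the truncation triangle $\tau^\prime_{\leq l-1}M \to M \to H^l_{\mathbbm{t}^\prime}(M)[-l] \to $ on the left by $X$, apply the inductive hypothesis to the two outer terms, and conclude using closure of $\mathcal{K}^{\leq n+m}$ under extensions. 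The $\geq$-inclusion follows by the symmetric argument using the dual truncation triangle and the smallest nonzero cohomological degree of $M$.

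The main obstacle is really just careful index bookkeeping — ensuring that cohomological degrees, truncation indices, and the bounds $n$ and $m$ align correctly at each inductive step, particularly when passing back and forth between the statement ``cohomology concentrated in degree $k$'' and membership in $\mathcal{T}^{\leq n}$ or $\mathcal{K}^{\leq m}$. The boundedness of both t-structures is essential, since it guarantees that the induction on cohomological length terminates; this is the structural role it plays in the analogous argument of \cite[Lemma 3.4]{Big07} for the purely monoidal setting, which we are adapting here to the relative module-category situation where the two tensor factors live in different triangulated categories with different t-structures.
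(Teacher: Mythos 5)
Your argument is correct and is exactly the approach the paper intends: the paper simply defers to the proof of Lemma 3.4 of Biglari, and your reformulation of (2) as the inclusions $\mathcal{T}^{\leq n}\otimes\mathcal{K}^{\leq m}\subseteq\mathcal{K}^{\leq n+m}$ and $\mathcal{T}^{\geq n}\otimes\mathcal{K}^{\geq m}\subseteq\mathcal{K}^{\geq n+m}$, proved by double induction on cohomological length via truncation triangles and closure of the aisles under extensions, is the natural adaptation of Biglari's argument to the module setting. The index bookkeeping ($k\leq n$, $l\leq m$ giving $k+l\leq n+m$) and the use of boundedness to terminate the induction are all in order.
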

We now establish a general K\"{u}nneth formula relating the cohomology associated with a t-structure on a monoidal triangulated category and that on a triangulated module category. This result generalizes the corresponding formula for monoidal triangulated categories (\cite[Theorem 4.1]{Big07}).
\begin{proposition}\label{prop:kunneth}
Let $\mathcal{T}$ be a monoidal triangulated category equipped with a compatible bounded t-structure $\mathbbm{t}$, and let $\mathcal{K}$ be a triangulated module category over $\mathcal{T}$ endowed with a bounded t-structure $\mathbbm{t}^\prime$. Suppose that the module product bifunctor is compatible with $\mathbbm{t}$ and $\mathbbm{t}^\prime$. Then for objects $X\in\mathcal{T}$, $M\in\mathcal{K}$ and an integer $n$, there is a natural isomorphism
$$
H^n_{\mathbbm{t}^\prime}(X\otimes M)\cong \coprod\limits_{p+q=n} H^p_{\mathbbm{t}^\prime}(X)\otimes H^q_{\mathbbm{t}^\prime}(M).
$$
\end{proposition}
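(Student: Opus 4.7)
The plan is to follow the strategy of the proof of \cite[Theorem 4.1]{Big07}, adapting it from the monoidal triangulated setting to the triangulated module setting. The proof will proceed by induction on the amplitude of $X$ (i.e., the number of indices $p$ with $H^p_\mathbbm{t}(X) \neq 0$), with the base case handled via a further reduction on the amplitude of $M$. Throughout, the compatibility input from Lemma \ref{lem:tauprime=0} is the key tool.

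First I would treat the base case in which $X$ and $M$ are concentrated in single degrees: $X \cong X_0[-p]$ with $X_0 \in \mathcal{H}_\mathbbm{t}$ and $M \cong M_0[-q]$ with $M_0 \in \mathcal{H}_{\mathbbm{t}'}$. Then $X \otimes M \cong (X_0 \otimes M_0)[-(p+q)]$, and the compatibility assumption gives $X_0 \otimes M_0 \in \mathcal{H}_{\mathbbm{t}'}$, so $H^n_{\mathbbm{t}'}(X \otimes M)$ equals $X_0 \otimes M_0$ for $n = p+q$ and vanishes otherwise; this matches the claimed formula, since all summands on the right-hand side with $(a,b) \neq (p,q)$ vanish for the same single-degree reason. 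As a useful reformulation, I would record that Lemma \ref{lem:tauprime=0} applied with $n = 0$ shows that for any $X_0 \in \mathcal{H}_\mathbbm{t}$ the functor $X_0 \otimes (-) \colon \mathcal{K} \to \mathcal{K}$ is t-exact with respect to $\mathbbm{t}'$, hence commutes with $H^n_{\mathbbm{t}'}$; this already settles the case in which $X$ is concentrated in a single degree for arbitrary $M$.

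For the inductive step, assume $X$ has amplitude at least two and let $p$ be the smallest index with $H^p_\mathbbm{t}(X) \neq 0$, so that $\tau_{\leq p}X \cong H^p_\mathbbm{t}(X)[-p]$ while $\tau_{\geq p+1}X$ has strictly smaller amplitude. Tensoring the truncation triangle $\tau_{\leq p}X \to X \to \tau_{\geq p+1}X \to \tau_{\leq p}X[1]$ with $M$ and applying $H^n_{\mathbbm{t}'}$ yields a long exact sequence whose outer terms are, by the single-degree case and the inductive hypothesis applied to $\tau_{\geq p+1}X$, the partial sums $H^p_\mathbbm{t}(X) \otimes H^{n-p}_{\mathbbm{t}'}(M)$ and $\coprod_{a \geq p+1,\, a+b=n} H^a_\mathbbm{t}(X) \otimes H^b_{\mathbbm{t}'}(M)$. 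Provided the connecting morphism vanishes, the sequence breaks into a split short exact sequence yielding the desired direct sum decomposition.

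Showing that this connecting morphism is zero will be the main obstacle. My plan is to invoke Lemma \ref{lem:tauprime=0} a second time to control the cohomological support of each piece through a simultaneous truncation of $M$: combining the truncation triangles for $X$ with respect to $\mathbbm{t}$ and for $M$ with respect to $\mathbbm{t}'$ via the octahedral axiom produces a bifiltration of $X \otimes M$ whose associated graded pieces are the shifts $H^a_\mathbbm{t}(X) \otimes H^b_{\mathbbm{t}'}(M)[-(a+b)]$. Compatibility places each such piece in the single degree $a+b$ of $\mathcal{K}$, and property (ii) of the t-structure $\mathbbm{t}'$ then forces the extensions between pieces of distinct total degree $a+b$ to split upon passing to $H^n_{\mathbbm{t}'}$. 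Collecting the summands with $a+b = n$ yields the required natural isomorphism.
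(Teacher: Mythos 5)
The main issue is the argument that the connecting morphism vanishes. You assert that ``property~(ii) of the t-structure $\mathbbm{t}'$ forces the extensions between pieces of distinct total degree $a+b$ to split upon passing to $H^n_{\mathbbm{t}'}$,'' but this is not a consequence of axiom~(ii). If two graded pieces of the bifiltration sit in cohomological degrees that differ by exactly~one, say $c$ and $c+1$, the relevant Hom-space $\operatorname{Hom}_\mathcal{K}(\text{(degree }c+1\text{)},\text{(degree }c\text{)}[1])$ is governed by a degree-$2$ extension group and has no reason to vanish; the corresponding triangle need not split, and the induced differential in the filtration spectral sequence is not killed by degree considerations. In fact, for any bounded filtration the $E_1$-differentials shift total degree by one, so every potential differential involves adjacent degrees and none is ruled out by compatibility or by axiom~(ii). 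The degeneration of this spectral sequence \emph{is} the K\"unneth formula, so your argument assumes what it must prove. (In addition, even if the connecting morphism is zero, you obtain only a short exact sequence in $\mathcal{H}_{\mathbbm{t}'}$; the stated result requires a \emph{natural} isomorphism, which does not follow from an unspecified splitting.)

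The paper sidesteps this by reversing the order of construction and verification: it first builds a natural morphism $\cap\colon \coprod_{p+q=n} H^p_{\mathbbm{t}}(X)\otimes H^q_{\mathbbm{t}'}(M)\to H^n_{\mathbbm{t}'}(X\otimes M)$ from the truncation triangles and Lemma~\ref{lem:tauprime=0}, checks directly that $\cap$ is an isomorphism when either factor is concentrated in a single degree, and then fixes $M$ and shows that the full subcategory $\mathcal{S}\subseteq\mathcal{T}$ of objects $X$ for which $\cap$ is an isomorphism (for all $n$) is closed under shifts, finite coproducts, and extensions --- the last via the five lemma applied to the ladder of long exact sequences obtained by tensoring a triangle in $\mathcal{T}$ with $H^q_{\mathbbm{t}'}(M)\in\mathcal{H}_{\mathbbm{t}'}$. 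Since $\mathcal{S}$ is a triangulated subcategory containing $\mathcal{H}_{\mathbbm{t}}$, it is all of $\mathcal{T}$ by \cite[Lemma~3.1]{Big07}. With this d\'evissage, naturality is manifest and the vanishing of connecting maps emerges as a consequence rather than an input. If you wish to keep the long-exact-sequence approach, the missing ingredient is a second truncation of $M$: comparing the connecting map for $(X,M)$ with the one for $(X,\tau'_{\leq n-p-1}M)$, whose target $H^p_{\mathbbm{t}}(X)\otimes H^{n+1-p}_{\mathbbm{t}'}(\tau'_{\leq n-p-1}M)$ is zero, forces the former to vanish by naturality --- but a bare appeal to axiom~(ii) and the bifiltration does not.
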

\begin{proof}
This can be established via an argument analogous to that of \cite[Theorem 4.1]{Big07}. Nevertheless, we provide the proof below for the sake of completeness and reader convenience. For any pair $p, q$ of integers with $p+q=n$, we consider the distinguished triangle
$$\tau^\prime_{\leq n}(\tau_{\leq p} X\otimes \tau^\prime_{\leq q} M)\rightarrow \tau_{\leq p} X\otimes \tau^\prime_{\leq q} M \rightarrow \tau^\prime_{\geq n+1}(\tau_{\leq p} X\otimes \tau^\prime_{\leq q} M)\rightarrow \tau^\prime_{\leq n}(\tau_{\leq p} X\otimes \tau^\prime_{\leq q} M)[1].$$
By Lemma \ref{lem:tauprime=0}, we have
$\tau^\prime_{\leq n}(\tau_{\leq p} X\otimes \tau^\prime_{\leq q} M)\cong \tau_{\leq p} X\otimes \tau^\prime_{\leq q} M.$
The definition of $\tau_{\leq n}^\prime$ gives a natural morphism $\rho:\tau_{\leq p} X\otimes \tau^\prime_{\leq q} M \rightarrow \tau^\prime_{\leq n}(X\otimes M).$
Next we consider the distinguished triangle
$$
\tau_{\leq p-1}X\rightarrow \tau_{\leq p} X\rightarrow H^p_\mathbbm{t}(X)[-p]\rightarrow (\tau_{\leq p-1}X)[1]
$$
in $\mathcal{T}$
and apply the triangulated functor $-\otimes \tau^\prime_{\leq q} M$ to it. From Lemma \ref{lem:tauprime=0}, the long exact cohomology sequence associated with the resulting triangle shows that $H^n_{\mathbbm{t}^\prime}(\tau_{\leq p }X\otimes \tau^\prime_{\leq q}M)$ is naturally isomorphic to $ H^n_{\mathbbm{t}^\prime}(H^p_\mathbbm{t}(X)[-p]\otimes \tau_{\leq q}^\prime M).$
Similarly, applying the triangulated functor $H^p_\mathbbm{t}(X)[-p]\otimes -$ to the distinguished triangle
$$
\tau^\prime_{\leq q-1}M\rightarrow \tau^\prime_{\leq q} M\rightarrow H^q_{\mathbbm{t}^\prime}(M)[-q]\rightarrow (\tau_{\leq q-1}M)[1]
$$
yields that the natural morphim $$k_n:=H^n_{\mathbbm{t}^\prime} (\tau_{\leq p }X\otimes \tau^\prime_{\leq q}M \rightarrow H^p_\mathbbm{t}(X)[-p]\otimes H^q_{\mathbbm{t}^\prime}(M)[-q])$$ is an isomorphism. We now define the morphism ${}_{p}\cap_{q}$ as the composition $H^n(\rho)\circ k_n^{-1}.$ Then, by taking the coproduct over $p+q=n$ of these components, we obtain the induced morphism
$$\cap:\coprod\limits_{p+q=n} H^p_\mathbbm{t}(X)\otimes H^q_{\mathbbm{t}^\prime}(M) \rightarrow H^n_{\mathbbm{t}^\prime}(X\otimes M),$$ which is natural in both arguments. Note that $\cap$ becomes a natural isomorphism if either $X \cong A[-p]$ for some $A \in \mathcal{H}_\mathbbm{t}$, or $M \cong B[-q]$ for some $B \in \mathcal{H}_{\mathbbm{t}^\prime}$. Next we show that $\cap$ is an isomorphism for all $X\in\mathcal{T}$ and $M\in\mathcal{K}$. For a fix object $M\in\mathcal{K}$, define the class $\mathcal{S}$ to be the full subcategory of $\mathcal{T}$ consisting of objects $X$ for which the morphism $\cap$ is an isomorphism for all integers $n$. Clearly, $\mathcal{S}$ is invariant under shifts and finite coproducts. Applying the triangulated functor $-\otimes H^q_{\mathbbm{t}^\prime}(M)$ to the distinguished triangle
$$
U\rightarrow V\rightarrow X\rightarrow U[1]$$ in $\mathcal{T}$,
where $U, V\in\mathcal{S},$ we obtain a new distinguished triangle
$$
U\otimes H^q_{\mathbbm{t}^\prime}(M)\rightarrow V\otimes H^q_{\mathbbm{t}^\prime}(M)\rightarrow X\otimes H^q_{\mathbbm{t}^\prime}(M)\rightarrow (U\otimes H^q_{\mathbbm{t}^\prime}(M))[1].
$$
Since $H^q_{\mathbbm{t}^\prime}(Y)\in \mathcal{H}_{\mathbbm{t}^\prime}$, we see that the long exact sequence of cohomology objects corresponding to the latter distinguished triangle reads as
\begin{eqnarray*}
\cdots &\rightarrow& H^p_{\mathbbm{t}}(U)\otimes H^q_{\mathbbm{t}^\prime}(M)\rightarrow H^p_\mathbbm{t}(V) \otimes H^q_{\mathbbm{t}^\prime}(M)\rightarrow H^p_{\mathbbm{t}}(X)\otimes H^q_{\mathbbm{t}^\prime}(M) \\
&&\rightarrow H^{p+1}_\mathbbm{t}(U)\otimes H^q_{\mathbbm{t}^\prime}(M) \rightarrow H^{p+1}_\mathbbm{t}\otimes H^q_{\mathbbm{t}^\prime}(M)\rightarrow \cdots.
\end{eqnarray*}
Consider the coproduct of such five termed exact sequences corresponding to all pairs $p, q$ with $p+q=n.$ By the naturallity of $\cap$ and the five lemma in the abelian category $\mathcal{H}_{\mathbbm{t}^\prime}$, we can show that $\cap$ is an isomorphism for the term corresponding to $X\otimes M.$
This means that $\mathcal{S}$ is closed under extension and thus $\mathcal{S}$ is a triangulated subcategory of $\mathcal{T}$ containing $\mathcal{H}_\mathbbm{t}$. Hence by \cite[Lemma 3.1]{Big07}, $\mathcal{S}$ coincides with $\mathcal{T}$. The proof is completed.
\end{proof}

\subsection{Triangulated module t-structures}
In this subsection, we aim to introduce triangulated module t-structures and derive their fundamental properties.

Before proceeding further, we recall the definition of a monoidal t-structure. From \cite[Definition 2.10]{XZ25}, a \textit{monoidal t-structure} on a monoidal triangulated category $\mathcal{T}$ is a bounded t-structure $\mathbbm{t} = (\mathcal{T}^{\leq 0}, \mathcal{T}^{\geq 0})$ such that, for some $n \in \mathbb{Z}$,
$\mathcal{T}^{\leq 0} \otimes \mathcal{T}^{\leq n} \subseteq \mathcal{T}^{\leq 0}$ and  $\mathcal{T}^{\geq 0} \otimes \mathcal{T}^{\geq n} \subseteq \mathcal{T}^{\geq 0}.$  The set of integers $n$ with this property is called the \textit{deviation} of $\mathbbm{t}$, denoted by $\operatorname{dev}(\mathbbm{t})$.

Inspired by the above idea, we introduce the definition of a triangulated module t-structure.
\begin{definition}
Let $\mathcal{T}$ be a monoidal triangulated category equipped with a monoidal t-structure $\mathbbm{t}=(\mathcal{T}^{\leq 0}, \mathcal{T}^{\geq 0})$, and let $\mathcal{K}$ be a triangulated module category over $\mathcal{T}$. A bounded t-structure $\mathbbm{t}^\prime=(\mathcal{K}^{\leq 0}, \mathcal{K}^{\geq 0})$ on $\mathcal{K}$ is called a triangulated module t-structure with respect to $\mathbbm{t}$, if there exists an integer
$m\in\mathbb{Z}$ such that
\begin{itemize}
\item[(1)]$\mathcal{T}^{\leq 0}\otimes \mathcal{K}^{\leq m}\subseteq\mathcal{K}^{\leq 0}$;
\item[(2)]$\mathcal{T}^{\geq 0}\otimes \mathcal{K}^{\geq m}\subseteq\mathcal{K}^{\geq 0}$.
\end{itemize}
We call the set of integers $m$ satisfying conditions (1) and (2) the deviation of $\mathbbm{t}^\prime$ with respect to $\mathbbm{t}$, denoted $\operatorname{dev}_{\mathbbm{t}}(\mathbbm{t}^\prime)$.
\end{definition}
\begin{example}\rm\label{ex:standardt-structure}
Let $\mathcal{A}$ be an abelian category. The \textit{standard t-structure} $\mathbbm{t}_{\mathcal{A}}:=(D^{\leq0}_{\mathcal{A}}, D^{\geq 0}_{\mathcal{A}})$ on its derived category $\mathbf{D}^b(\mathcal{A})$ is defined as follows:
$$D^{\leq 0}_\mathcal{A}:=\{X\in \mathbf{D}^b(\mathcal{A})\mid H^i(X)=0, \forall i> 0\},\;\;D^{\geq 0}_\mathcal{A}:=\{X\in \mathbf{D}^b(\mathcal{A})\mid H^i(X)=0, \forall i< 0\}.$$
Let $\mathcal{C}$ be a multiring category and $\mathcal{M}$ a left $\mathcal{C}$-module category. Then the standard t-structure $\mathbbm{t}_{\mathcal{C}}$ is a monoidal t-sturcture on $\mathbf{D}^b(\mathcal{C})$ with $0\in\operatorname{dev}(\mathbbm{t}_{\mathcal{C}})$.  Moreover, $\mathbbm{t}_{\mathcal{M}}$ is a triangulated module t-structure on $\mathbf{D}^b(\mathcal{M})$ with respect to $\mathbbm{t}_{\mathcal{C}}$ and $0\in\operatorname{dev}_{\mathbbm{t}_{\mathcal{C}}}(\mathbbm{t}_{\mathcal{M}})$.
\end{example}

We now present the following lemma.
\begin{lemma}\label{lem:t[-k]}
Let $\mathcal{T}$ be a monoidal triangulated category equipped with a monoidal t-structure $\mathbbm{t}$, and let $\mathcal{K}$ be a triangulated module category over $\mathcal{T}$. Suppose that $\mathbbm{t}^\prime$ is a triangulated module t-structure on $\mathcal{K}$ with respect to $\mathbbm{t}$.
Then for any $k\in\mathbb{Z}$, $\mathbbm{t}^\prime[-k]$ is also a triangulated module t-structure on $\mathcal{K}$ with respect to $\mathbbm{t}$. Moreover, $\operatorname{dev}_{\mathbbm{t}}(\mathbbm{t}^\prime)= \operatorname{dev}_{\mathbbm{t}}(\mathbbm{t}^\prime[-k])$.
\end{lemma}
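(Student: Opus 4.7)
The plan is to verify the two bulleted conditions of the definition directly, exploiting the fact that the module product $\otimes$ is triangulated (hence exact) in each variable and so commutes canonically with the shift functor $[1]$ on $\mathcal{K}$.

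First I would unpack the notation: by definition $\mathbbm{t}^\prime[-k]=(\mathcal{K}^{\leq k},\mathcal{K}^{\geq k})$, so writing $(\mathcal{K}^\prime)^{\leq n}$ and $(\mathcal{K}^\prime)^{\geq n}$ for the $n$-th shifted pieces of $\mathbbm{t}^\prime[-k]$, one has $(\mathcal{K}^\prime)^{\leq n}=\mathcal{K}^{\leq k+n}$ and $(\mathcal{K}^\prime)^{\geq n}=\mathcal{K}^{\geq k+n}$. Boundedness of $\mathbbm{t}^\prime[-k]$ is then immediate from boundedness of $\mathbbm{t}^\prime$, since $\bigcup_{n}\mathcal{K}^{\leq k+n}=\bigcup_{j}\mathcal{K}^{\leq j}=\mathcal{K}$, and similarly for the right-hand pieces.

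Next, given any $m\in\operatorname{dev}_{\mathbbm{t}}(\mathbbm{t}^\prime)$, I would show that the same $m$ lies in $\operatorname{dev}_{\mathbbm{t}}(\mathbbm{t}^\prime[-k])$. For the first condition take $X\in\mathcal{T}^{\leq 0}$ and $M\in\mathcal{K}^{\leq k+m}=\mathcal{K}^{\leq m}[-k]$, write $M\cong M^\prime[-k]$ with $M^\prime\in\mathcal{K}^{\leq m}$, and use the natural isomorphism $X\otimes M^\prime[-k]\cong(X\otimes M^\prime)[-k]$ (from $X\otimes -$ being triangulated) together with $X\otimes M^\prime\in\mathcal{K}^{\leq 0}$ to conclude $X\otimes M\in\mathcal{K}^{\leq 0}[-k]=\mathcal{K}^{\leq k}$. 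The second condition is proved symmetrically, yielding the inclusion $\operatorname{dev}_{\mathbbm{t}}(\mathbbm{t}^\prime)\subseteq\operatorname{dev}_{\mathbbm{t}}(\mathbbm{t}^\prime[-k])$.

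For the reverse inclusion I would observe $(\mathbbm{t}^\prime[-k])[k]=\mathbbm{t}^\prime$ and apply the same argument with $\mathbbm{t}^\prime[-k]$ in place of $\mathbbm{t}^\prime$ and $-k$ in place of $k$. Combining both directions gives the claimed equality $\operatorname{dev}_{\mathbbm{t}}(\mathbbm{t}^\prime)=\operatorname{dev}_{\mathbbm{t}}(\mathbbm{t}^\prime[-k])$. No step presents a genuine obstacle; the only care needed is consistent bookkeeping of the shift indices under the convention $\mathbbm{t}[-n]=(\mathcal{K}^{\leq n},\mathcal{K}^{\geq n})$, and the explicit invocation of the fact that each variable of the module action commutes with $[1]$.
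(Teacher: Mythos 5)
Your proof is correct and is essentially the same as the paper's: both verify that the defining inclusions for $m\in\operatorname{dev}_{\mathbbm{t}}(\mathbbm{t}^\prime)$ are equivalent, after applying the shift $[-k]$, to the defining inclusions for $\mathbbm{t}^\prime[-k]$, using that the module action commutes with the suspension. The paper states this equivalence directly, while you make the underlying shift-bookkeeping and the two inclusions explicit, but the argument is the same.
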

\begin{proof}
For $m\in \operatorname{dev}_{\mathbbm{t}}(\mathbbm{t}^\prime)$,  we have
$$
\mathcal{T}^{\leq 0}\otimes \mathcal{K}^{\leq m}\subseteq \mathcal{K}^{\leq 0}\;\; \text{and }\mathcal{T}^{\geq 0}\otimes \mathcal{K}^{\geq m}\subseteq\mathcal{K}^{\geq 0}.
$$
The expression above is equivalent to
$$
\mathcal{T}^{\leq 0}\otimes \mathcal{K}^{\leq k+m} \subseteq \mathcal{K}^{\leq k}\;\; \text{and }\mathcal{T}^{\geq  0}\otimes \mathcal{K}^{\geq  k+m}\subseteq\mathcal{K}^{\geq k},
$$
for any $k\in\mathbb{Z},$
which means that $\mathbbm{t}^\prime[-k]$ is a triangulated module t-structure on $\mathcal{K}$ with respect to $\mathbbm{t}$ and $\operatorname{dev}_{\mathbbm{t}}(\mathbbm{t}^\prime)= \operatorname{dev}_{\mathbbm{t}}(\mathbbm{t}^\prime[-k])$.
\end{proof}

Although the deviation of a monoidal t-structure $\mathbbm{t}$ does not need to contain $0$ in general, \cite[Lemma 2.11]{XZ25} shows that one can always choose an integer shift $k \in \mathbb{Z}$ so that $\mathbbm{t}[-k]$ remains a monoidal t-structure and satisfies $0 \in \operatorname{dev}(\mathbbm{t}[-k])$.
Moreover, if $0 \in \operatorname{dev}(\mathbbm{t})$, then $\operatorname{dev}(\mathbbm{t})=\{0\}$ (\cite[Proposition 2.15]{XZ25}), and $\mathcal{H}_{\mathbbm{t}}$ is a multiring category (\cite[Proposition 2.13]{XZ25}), which implies that $\mathbbm{t}$ is compatible. Therefore, in what follows we will always assume $0 \in \operatorname{dev}(\mathbbm{t})$.

For the remainder of this subsection, let $\mathcal{T}$ be a monoidal triangulated category equipped with a monoidal t-structure $\mathbbm{t}=(\mathcal{T}^{\leq 0}, \mathcal{T}^{\geq 0})$ with $0\in\operatorname{dev}(\mathbbm{t})$, and let $\mathcal{K}$ be a triangulated module category over $\mathcal{T}$. Suppose that $\mathbbm{t}^\prime=(\mathcal{K}^{\leq 0}, \mathcal{K}^{\geq 0})$ is a triangulated module t-structure on $\mathcal{K}$ with respect to $\mathbbm{t}$.

In fact, properties analogous to those for monoidal t-structures also hold in the setting of triangulated module t-structures.
\begin{lemma}\label{lem:heartmodule}
If $0\in \operatorname{dev}_{\mathbbm{t}}(\mathbbm{t}^\prime)$, then $\mathcal{H}_{\mathbbm{t}^\prime}$ is a module category over $\mathcal{H}_{\mathbbm{t}}$ whose module product bifunctor is biexact.
\end{lemma}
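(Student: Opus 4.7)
The plan is to verify, in sequence, that the module product bifunctor $\otimes : \mathcal{T} \times \mathcal{K} \to \mathcal{K}$ restricts to a bifunctor $\mathcal{H}_{\mathbbm{t}} \times \mathcal{H}_{\mathbbm{t}^\prime} \to \mathcal{H}_{\mathbbm{t}^\prime}$, that the associativity and unit isomorphisms of the triangulated module structure on $\mathcal{K}$ endow this restriction with the structure of a module category over $\mathcal{H}_{\mathbbm{t}}$, and finally that the restricted bifunctor is exact in each variable. The hypothesis $0 \in \operatorname{dev}_{\mathbbm{t}}(\mathbbm{t}^\prime)$ unpacks to $\mathcal{T}^{\leq 0} \otimes \mathcal{K}^{\leq 0} \subseteq \mathcal{K}^{\leq 0}$ and $\mathcal{T}^{\geq 0} \otimes \mathcal{K}^{\geq 0} \subseteq \mathcal{K}^{\geq 0}$. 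Intersecting these immediately gives $X \otimes M \in \mathcal{K}^{\leq 0} \cap \mathcal{K}^{\geq 0} = \mathcal{H}_{\mathbbm{t}^\prime}$ whenever $X \in \mathcal{H}_{\mathbbm{t}}$ and $M \in \mathcal{H}_{\mathbbm{t}^\prime}$, which yields the desired restriction.

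For the module category axioms, since $\mathcal{H}_{\mathbbm{t}}$ is a multiring category by \cite[Proposition 2.13]{XZ25} we have $\mathbbm{1} \in \mathcal{H}_{\mathbbm{t}}$ and $\mathcal{H}_{\mathbbm{t}} \otimes \mathcal{H}_{\mathbbm{t}} \subseteq \mathcal{H}_{\mathbbm{t}}$. Consequently, for $X, Y \in \mathcal{H}_{\mathbbm{t}}$ and $M \in \mathcal{H}_{\mathbbm{t}^\prime}$, both sides of the associator $m_{X,Y,M} : (X \otimes Y) \otimes M \to X \otimes (Y \otimes M)$ and of the left unitor $l_M : \mathbbm{1} \otimes M \to M$ already lie in $\mathcal{H}_{\mathbbm{t}^\prime}$. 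Since $\mathcal{H}_{\mathbbm{t}}$ and $\mathcal{H}_{\mathbbm{t}^\prime}$ are full subcategories, these isomorphisms restrict, and the pentagon and triangle coherence diagrams for the module category axioms hold on the nose, as they are already identities between morphisms in $\mathcal{K}$ with all relevant objects lying in the hearts.

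Biexactness follows from the standard description of the heart of a t-structure (\cite[Theorem 1.3.6]{BBD82}): a short exact sequence $0 \to A \to B \to C \to 0$ in $\mathcal{H}_{\mathbbm{t}^\prime}$ corresponds bijectively to a distinguished triangle $A \to B \to C \to A[1]$ in $\mathcal{K}$ whose three vertices all lie in $\mathcal{H}_{\mathbbm{t}^\prime}$, and conversely. Given such a short exact sequence and any $X \in \mathcal{H}_{\mathbbm{t}}$, applying the triangulated functor $X \otimes -$ produces a distinguished triangle $X \otimes A \to X \otimes B \to X \otimes C \to (X \otimes A)[1]$ whose vertices are all in $\mathcal{H}_{\mathbbm{t}^\prime}$ by the first step, so the sequence stays exact after tensoring with $X$; exactness in the first variable is symmetric, using the triangulated exactness of $- \otimes M$ for $M \in \mathcal{H}_{\mathbbm{t}^\prime}$. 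I do not foresee any serious obstacle: the proof reads off the conclusion from the choice of deviation $0 \in \operatorname{dev}_{\mathbbm{t}}(\mathbbm{t}^\prime)$, in close analogy with \cite[Proposition 2.13]{XZ25}, and the only mild care needed is invoking the heart-triangle correspondence when deducing biexactness from the triangulated exactness of the module product.
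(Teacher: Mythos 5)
Your proposal is correct and follows essentially the same route as the paper's proof: read off $\mathcal{H}_{\mathbbm{t}}\otimes\mathcal{H}_{\mathbbm{t}^\prime}\subseteq\mathcal{H}_{\mathbbm{t}^\prime}$ directly from $0\in\operatorname{dev}_{\mathbbm{t}}(\mathbbm{t}^\prime)$, and deduce biexactness from the correspondence between short exact sequences in the heart and distinguished triangles with vertices in the heart (\cite[Theorem 1.3.6]{BBD82}). You additionally spell out the verification of the coherence diagrams and the triangle-to-exact-sequence argument, which the paper leaves implicit, but the underlying idea is identical.
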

\begin{proof}
By definition, for any $X\in \mathcal{H}_{\mathbbm{t}}$ and $M\in \mathcal{H}_{\mathbbm{t}^\prime}$, we have $X\otimes M\in \mathcal{K}^{\geq 0}\cap \mathcal{K}^{\leq 0}$. This means that $\mathcal{H}_{\mathbbm{t}}\otimes \mathcal{H}_{\mathbbm{t}^\prime}\subseteq \mathcal{H}_{\mathbbm{t}^\prime}.$ The biexactness of the module product bifunctor is inherited (see \cite[Theorem 1.3.6]{BBD82}).
\end{proof}
As a consequence of \cite[Proposition 2.13]{XZ25} and Lemma \ref{lem:heartmodule}, we have the following corollary.
\begin{corollary}\label{coro:compatible}
If $0\in \operatorname{dev}_{\mathbbm{t}}(\mathbbm{t}^\prime)$, then the module product bifunctor is compatible with $\mathbbm{t}$ and $\mathbbm{t}^\prime.$
\end{corollary}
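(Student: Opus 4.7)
The plan is to combine the two ingredients already in place and verify that the result falls out immediately from the definition of compatibility. The definition of the module product bifunctor being compatible with $\mathbbm{t}$ and $\mathbbm{t}^\prime$ is just the inclusion $\mathcal{H}_{\mathbbm{t}}\otimes \mathcal{H}_{\mathbbm{t}^\prime}\subseteq \mathcal{H}_{\mathbbm{t}^\prime}$, together with the background compatibility of $\mathbbm{t}$ with itself, i.e., $\mathcal{H}_{\mathbbm{t}}\otimes \mathcal{H}_{\mathbbm{t}}\subseteq \mathcal{H}_{\mathbbm{t}}$.

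First, I would invoke \cite[Proposition 2.13]{XZ25}: since the standing assumption on $\mathbbm{t}$ is that it is a monoidal t-structure with $0\in\operatorname{dev}(\mathbbm{t})$, that proposition ensures $\mathcal{H}_{\mathbbm{t}}$ is a multiring category. In particular, $\mathcal{H}_{\mathbbm{t}}\otimes \mathcal{H}_{\mathbbm{t}}\subseteq \mathcal{H}_{\mathbbm{t}}$, so $\mathbbm{t}$ is compatible in the sense required to even pose the module compatibility question.

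Second, the hypothesis $0\in\operatorname{dev}_{\mathbbm{t}}(\mathbbm{t}^\prime)$ is exactly the condition under which Lemma \ref{lem:heartmodule} applies, yielding $\mathcal{H}_{\mathbbm{t}}\otimes \mathcal{H}_{\mathbbm{t}^\prime}\subseteq \mathcal{H}_{\mathbbm{t}^\prime}$. Combining this with the first step gives precisely the definition of the module product bifunctor being compatible with $\mathbbm{t}$ and $\mathbbm{t}^\prime$, which is what we want.

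There is no genuine obstacle: the corollary is essentially a formal consequence of the preceding lemma together with the cited external input from \cite{XZ25}. The only verification needed is that the two hypotheses $0\in\operatorname{dev}(\mathbbm{t})$ and $0\in\operatorname{dev}_{\mathbbm{t}}(\mathbbm{t}^\prime)$ unlock exactly the two inclusions that together constitute compatibility, which is immediate from unwinding the definitions.
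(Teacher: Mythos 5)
Your proof is correct and follows exactly the paper's route: the standing assumption $0\in\operatorname{dev}(\mathbbm{t})$ together with \cite[Proposition 2.13]{XZ25} makes $\mathcal{H}_{\mathbbm{t}}$ a multiring category (hence $\mathbbm{t}$ is compatible), and Lemma \ref{lem:heartmodule} applied to $0\in\operatorname{dev}_{\mathbbm{t}}(\mathbbm{t}^\prime)$ gives $\mathcal{H}_{\mathbbm{t}}\otimes\mathcal{H}_{\mathbbm{t}^\prime}\subseteq\mathcal{H}_{\mathbbm{t}^\prime}$, which is the definition of compatibility. This is precisely what the paper means by "As a consequence of \cite[Proposition 2.13]{XZ25} and Lemma \ref{lem:heartmodule}."
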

The following lemma shows that the condition $0 \in \operatorname{dev}_{\mathbbm{t}}(\mathbbm{t}^\prime)$ forces the deviation of $\mathbbm{t}^\prime$ with respect to $\mathbbm{t}$ to be uniquely determined.
\begin{lemma}\label{lem:det=0}
If $0\in \operatorname{dev}_{\mathbbm{t}}(\mathbbm{t}^\prime)$, then $\operatorname{dev}_\mathbbm{t}(\mathbbm{t}^\prime)=\{0\}$.
\end{lemma}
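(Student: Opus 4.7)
The plan is to feed the monoidal unit $\mathbbm{1}$ into the defining inclusions of $\operatorname{dev}_{\mathbbm{t}}(\mathbbm{t}^\prime)$ to force both $\mathcal{K}^{\leq m}\subseteq\mathcal{K}^{\leq 0}$ and $\mathcal{K}^{\geq m}\subseteq\mathcal{K}^{\geq 0}$ whenever $m\in\operatorname{dev}_{\mathbbm{t}}(\mathbbm{t}^\prime)$, and then to exploit the bounded-ness of $\mathbbm{t}^\prime$ to exclude every $m\neq 0$. Since $0\in\operatorname{dev}(\mathbbm{t})$, \cite[Proposition 2.13]{XZ25} gives that $\mathcal{H}_\mathbbm{t}$ is a multiring category; in particular the unit $\mathbbm{1}$ lies in $\mathcal{H}_\mathbbm{t}=\mathcal{T}^{\leq 0}\cap\mathcal{T}^{\geq 0}$. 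For any $M\in\mathcal{K}^{\leq m}$, applying $\mathcal{T}^{\leq 0}\otimes\mathcal{K}^{\leq m}\subseteq\mathcal{K}^{\leq 0}$ to the pair $(\mathbbm{1},M)$ and using $\mathbbm{1}\otimes M\cong M$ yields $M\in\mathcal{K}^{\leq 0}$; hence $\mathcal{K}^{\leq m}\subseteq\mathcal{K}^{\leq 0}$, and the symmetric argument with the co-aisle gives $\mathcal{K}^{\geq m}\subseteq\mathcal{K}^{\geq 0}$.

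Next I would rule out $m\geq 1$. Since $\mathcal{K}^{\leq 0}\subseteq\mathcal{K}^{\leq m}$ is automatic, the previous step upgrades to $\mathcal{K}^{\leq 0}=\mathcal{K}^{\leq m}$; shifting (cf.\ Lemma \ref{lem:t[-k]}) gives $\mathcal{K}^{\leq k}=\mathcal{K}^{\leq k+m}$ for every $k\in\mathbb{Z}$, whence $\mathcal{K}^{\leq 0}=\mathcal{K}^{\leq km}$ for all $k\in\mathbb{Z}$. Letting $k\to+\infty$ together with the bounded-ness identity $\bigcup_{n\in\mathbb{Z}}\mathcal{K}^{\leq n}=\mathcal{K}$ forces $\mathcal{K}^{\leq 0}=\mathcal{K}$, while $k\to-\infty$ combined with the standard consequence of bounded-ness that $\bigcap_{n\in\mathbb{Z}}\mathcal{K}^{\leq -n}=0$ (obtained by noting that an object with all cohomology vanishing must itself be zero, via the truncation triangles) forces $\mathcal{K}^{\leq 0}=0$. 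These are compatible only if $\mathcal{K}=0$. The case $m\leq -1$ is handled symmetrically, using the automatic inclusion $\mathcal{K}^{\geq 0}\subseteq\mathcal{K}^{\geq m}$ and the co-aisle version of the same bounded-ness argument.

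The argument is essentially routine once one has the reduction via $X=\mathbbm{1}$; the only subtlety worth flagging, and which I view as the main obstacle, is that the statement implicitly requires $\mathcal{K}\neq 0$, since for $\mathcal{K}=0$ every integer lies vacuously in $\operatorname{dev}_\mathbbm{t}(\mathbbm{t}^\prime)$. Under this natural non-triviality assumption (which is automatic as soon as $\mathcal{K}$ admits a nontrivial bounded t-structure), the bookkeeping with shifts of t-structures and the standard vanishing statements for bounded t-structures complete the proof with no further obstacles.
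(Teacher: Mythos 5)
Your overall strategy — tensor a suitable unit-like object against $M\in\mathcal{K}^{\leq m}$ to pull it into $\mathcal{K}^{\leq 0}$, then derive a contradiction with boundedness — is close in spirit to the paper's argument. The paper picks a nonzero $M\in\mathcal{H}_{\mathbbm{t}^\prime}$, uses that $M[-m]\in\mathcal{K}^{\leq m}$, tensors with $H^0_\mathbbm{t}(\mathbbm{1})$ to land in $\mathcal{K}^{\leq 0}$, and observes directly that $H^0_\mathbbm{t}(\mathbbm{1})\otimes M[-m]\cong M[-m]$ has $H^m\neq 0$, contradicting $m>0$; your iterate-the-aisle variant accomplishes the same thing in a more roundabout way and is fine as bookkeeping. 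Your caveat about $\mathcal{K}=0$ is a fair observation, and the paper implicitly assumes $\mathcal{K}\neq 0$ as well (it needs a nonzero $M\in\mathcal{H}_{\mathbbm{t}^\prime}$).

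However, there is a genuine gap at the crucial step. You claim that since $\mathcal{H}_\mathbbm{t}$ is a multiring category, ``in particular the unit $\mathbbm{1}$ lies in $\mathcal{H}_\mathbbm{t}=\mathcal{T}^{\leq 0}\cap\mathcal{T}^{\geq 0}$.'' This is a non sequitur: the unit object of the multiring category $\mathcal{H}_\mathbbm{t}$ is $H^0_\mathbbm{t}(\mathbbm{1})$, not $\mathbbm{1}$ itself, and there is no a priori reason for the unit $\mathbbm{1}$ of $\mathcal{T}$ to be concentrated in degree $0$ for an arbitrary monoidal t-structure with $0\in\operatorname{dev}(\mathbbm{t})$. (This is precisely why the paper's proof, and \cite[Proposition 2.13]{XZ25} as cited, are phrased in terms of $H^0_\mathbbm{t}(\mathbbm{1})$.) Everything downstream relies on feeding $\mathbbm{1}$ into $\mathcal{T}^{\leq 0}\otimes\mathcal{K}^{\leq m}\subseteq\mathcal{K}^{\leq 0}$ and then using $\mathbbm{1}\otimes M\cong M$; if $\mathbbm{1}\notin\mathcal{T}^{\leq 0}$ that first step is unavailable. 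Note also that this gap cannot be patched simply by replacing $\mathbbm{1}$ with $H^0_\mathbbm{t}(\mathbbm{1})$ in your scheme: while $H^0_\mathbbm{t}(\mathbbm{1})\in\mathcal{T}^{\leq 0}$ always, the isomorphism $H^0_\mathbbm{t}(\mathbbm{1})\otimes M\cong M$ is only available for $M\in\mathcal{H}_{\mathbbm{t}^\prime}$ (it is the unit axiom of the $\mathcal{H}_\mathbbm{t}$-module structure from Lemma \ref{lem:heartmodule}), not for arbitrary $M\in\mathcal{K}^{\leq m}$. So you would need to restrict attention to heart objects shifted into $\mathcal{K}^{\leq m}$ — at which point the argument collapses to the paper's own proof.
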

\begin{proof}
Suppose there exists a nonzero $m\in\mathbb{Z}$ such that $m\in \operatorname{dev}_\mathbbm{t}(\mathbbm{t}^\prime)$. Without loss of generality, we assume $m>0.$ Then $\mathcal{T}^{\leq 0}\otimes \mathcal{K}^{\leq m}\subseteq\mathcal{K}^{\leq 0}$.
Note that for any nonzero object $M\in \mathcal{H}_{t^\prime}$, we have $M[-m]\in \mathcal{K}^{\leq m}.$ By \cite[Proposition 2.13]{XZ25} and Lemma \ref{lem:heartmodule},
$\mathcal{H}_{\mathbbm{t}^\prime}$ is a module category over the multiring category $\mathcal{H}_{\mathbbm{t}}$, where $H^0_{\mathbbm{t}}(\mathbbm{1})$ is the unit of $\mathcal{H}_{\mathbbm{t}}$.
It follows that $$H^0_{\mathbbm{t}}(\mathbbm{1})\otimes M[-m]\cong (H^0_{\mathbbm{t}}(\mathbbm{1})\otimes M)[-m] \cong M[-m]\neq 0.$$
However, \cite[Lemma 2.9 (2)]{XZ25} implies that $M[-m]\notin \mathcal{K}^{\leq 0}$, which is a contradiction.
\end{proof}

\begin{proposition}\label{lem:faith=faith}
If $0\in \operatorname{dev}_{\mathbbm{t}}(\mathbbm{t}^\prime)$, then $\mathcal{K}$ is faithful as a $\mathcal{T}$-module category if and only if $\mathcal{H}_{\mathbbm{t}^\prime}$ is faithful as a $\mathcal{H}_{\mathbbm{t}}$-module category.
\end{proposition}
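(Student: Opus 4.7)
The plan is to invoke the general K\"{u}nneth formula of Proposition~\ref{prop:kunneth} in both directions, after noting that its compatibility hypothesis is automatic here: by Corollary~\ref{coro:compatible}, the standing assumption $0\in\operatorname{dev}_\mathbbm{t}(\mathbbm{t}^\prime)$ forces the module product bifunctor to be compatible with $\mathbbm{t}$ and $\mathbbm{t}^\prime$. Both t-structures are bounded ($\mathbbm{t}$ because it is monoidal and $\mathbbm{t}^\prime$ by definition of a triangulated module t-structure), so the K\"{u}nneth formula applies unconditionally. As a preliminary reformulation, I would record that $\mathcal{K}$ being faithful over $\mathcal{T}$ is equivalent to the existence, for every nonzero $X\in\mathcal{T}$, of some $M\in\mathcal{K}$ with $X\otimes M\not\cong 0$, since if $X\otimes M\cong 0$ for all $M$ then the functor $X\otimes -$ annihilates all identities and hence is the zero functor; the analogous reformulation for $\mathcal{H}_{\mathbbm{t}^\prime}$ over $\mathcal{H}_\mathbbm{t}$ is used similarly.

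For the implication $(\Leftarrow)$, suppose $\mathcal{H}_{\mathbbm{t}^\prime}$ is faithful over $\mathcal{H}_\mathbbm{t}$ and take any nonzero $X\in\mathcal{T}$. Boundedness of $\mathbbm{t}$ yields an integer $p$ with $A:=H^p_\mathbbm{t}(X)\neq 0$ in $\mathcal{H}_\mathbbm{t}$, and by assumption we may choose $N\in\mathcal{H}_{\mathbbm{t}^\prime}$ with $A\otimes N\not\cong 0$. Applying Proposition~\ref{prop:kunneth} to $X\otimes N$ and using that $H^q_{\mathbbm{t}^\prime}(N)$ equals $N$ for $q=0$ and vanishes otherwise, the coproduct collapses to a single term, giving
$$
H^p_{\mathbbm{t}^\prime}(X\otimes N)\;\cong\;H^p_\mathbbm{t}(X)\otimes N\;=\;A\otimes N\;\not\cong\;0,
$$
so $X\otimes N\not\cong 0$ in $\mathcal{K}$, as required.

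For the implication $(\Rightarrow)$, suppose $\mathcal{K}$ is faithful over $\mathcal{T}$ and take any nonzero $A\in\mathcal{H}_\mathbbm{t}$. Viewed as an object of $\mathcal{T}$, $A$ is nonzero, so faithfulness provides $M\in\mathcal{K}$ with $A\otimes M\not\cong 0$. Since $\mathbbm{t}^\prime$ is bounded, there exists $n$ with $H^n_{\mathbbm{t}^\prime}(A\otimes M)\neq 0$. Applying Proposition~\ref{prop:kunneth} again, this time noting that $H^p_\mathbbm{t}(A)$ vanishes unless $p=0$ (in which case it equals $A$), the K\"{u}nneth decomposition collapses to
$$
H^n_{\mathbbm{t}^\prime}(A\otimes M)\;\cong\;A\otimes H^n_{\mathbbm{t}^\prime}(M).
$$
Setting $N:=H^n_{\mathbbm{t}^\prime}(M)\in\mathcal{H}_{\mathbbm{t}^\prime}$, we obtain $A\otimes N\not\cong 0$, which shows that $\mathcal{H}_{\mathbbm{t}^\prime}$ is faithful over $\mathcal{H}_\mathbbm{t}$.

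I do not anticipate a substantive obstacle: the proof is essentially a clean application of the K\"{u}nneth formula combined with boundedness, and each direction simply exploits the collapse of the K\"{u}nneth coproduct whenever one of the two factors already lies in its heart. The only point meriting care is verifying at the outset that Corollary~\ref{coro:compatible} is available, which is immediate from the hypothesis $0\in\operatorname{dev}_\mathbbm{t}(\mathbbm{t}^\prime)$.
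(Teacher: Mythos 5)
Your proof is correct and rests on the same pillars as the paper's: the K\"unneth formula of Proposition~\ref{prop:kunneth} (available by Corollary~\ref{coro:compatible}), boundedness of the two t-structures, and isolating a nonvanishing cohomology. The execution differs in two useful ways. For the direction $\mathcal{H}_{\mathbbm{t}^\prime}$ faithful $\Rightarrow$ $\mathcal{K}$ faithful, you first use faithfulness of the heart to choose a witness $N\in\mathcal{H}_{\mathbbm{t}^\prime}$ with $H^p_\mathbbm{t}(X)\otimes N\neq 0$ and only then apply K\"unneth, so the coproduct collapses to that single term; the paper instead writes the K\"unneth isomorphism for arbitrary nonzero $T\in\mathcal{T}$ and $K\in\mathcal{K}$ and asserts $H^n_\mathbbm{t}(T)\otimes H^m_{\mathbbm{t}^\prime}(K)\neq 0$, which as literally stated requires the stronger ``every nonzero pair has nonzero product'' form of faithfulness rather than the definition's ``acts by a nonzero functor''; your version carefully uses only the stated hypothesis. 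Conversely, the paper declares the other implication immediate, whereas you spell out the K\"unneth collapse at $A\in\mathcal{H}_\mathbbm{t}$ to manufacture $N:=H^n_{\mathbbm{t}^\prime}(M)$ with $A\otimes N\neq 0$; this is the right argument, since the witness $M$ supplied by faithfulness of $\mathcal{K}$ need not lie in the heart.
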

\begin{proof}
The ``only if" implication is immediate; it remains to verify the ``if" direction. For any nonzero objects $T\in\mathcal{T}$ and $K\in\mathcal{K}$, since both $\mathbbm{t}$ and $\mathbbm{t}^\prime$ are bounded, it follows from \cite[Lemma 2.9 (1)]{XZ25} that there exist integers $m$ and $n$ such that $T\in\mathcal{T}^{\leq n}$ with $H^n_{\mathbbm{t}}(T)\neq 0$ and $K\in\mathcal{K}^{\leq m}$ with $H^m_{\mathbbm{t}^\prime}(K)\neq 0$. Combining Proposition \ref{prop:kunneth} and Corollary \ref{coro:compatible}, we know that
$$
H^{n+m}_{\mathbbm{t}^\prime}(T\otimes K)\cong \coprod\limits_{p+q=n+m} H^p_{\mathbbm{t}}(T)\otimes H^q_{\mathbbm{t}^\prime}(K).
$$
Since $H^n_{\mathbbm{t}}(T) \otimes H^m_{\mathbbm{t}^\prime}(K)\neq 0,$ it follows that $T\otimes K\neq 0.$
\end{proof}

Combining Corollary \ref{coro:tensorfaithful} and Proposition \ref{lem:faith=faith}, we have the following corollary.
\begin{corollary}\label{coro:triangulatedfaithful}
Let $\mathcal{C}$ be a multiring category and $\mathcal{M}$ be a left faithful $\mathcal{C}$-module category. Then $\mathbf{D}^b(\mathcal{M})$ is a faithful triangulated module category over $\mathbf{D}^b(\mathcal{C})$. In particular, for a tensor category $\mathcal{C}$ and any $\mathcal{C}$-module category $\mathcal{M}$, the bounded derived category $\mathbf{D}^b(\mathcal{M})$ is faithful over $\mathbf{D}^b(\mathcal{C})$.
\end{corollary}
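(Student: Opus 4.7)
The plan is to apply Proposition \ref{lem:faith=faith} directly, taking the monoidal triangulated category to be $\mathcal{T}:=\mathbf{D}^b(\mathcal{C})$ equipped with its standard t-structure $\mathbbm{t}_{\mathcal{C}}$, and the triangulated module category to be $\mathcal{K}:=\mathbf{D}^b(\mathcal{M})$ equipped with its standard t-structure $\mathbbm{t}_{\mathcal{M}}$ (see Example \ref{ex:exact} through Example \ref{ex:derived} for the module structure, and Example \ref{ex:standardt-structure} for the t-structures themselves). The first step is to record that the hypotheses of Proposition \ref{lem:faith=faith} hold in this setting: by Example \ref{ex:standardt-structure} we have $0\in\operatorname{dev}(\mathbbm{t}_{\mathcal{C}})$ and $0\in\operatorname{dev}_{\mathbbm{t}_{\mathcal{C}}}(\mathbbm{t}_{\mathcal{M}})$, so both $\mathbbm{t}_{\mathcal{C}}$ and $\mathbbm{t}_{\mathcal{M}}$ are bounded and the module product bifunctor of $\mathbf{D}^b(\mathcal{C})$ on $\mathbf{D}^b(\mathcal{M})$ is compatible with them (this compatibility is exactly Corollary \ref{coro:compatible}).

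The second step is an identification of hearts. By standard derived-category generalities, the heart $\mathcal{H}_{\mathbbm{t}_{\mathcal{C}}}$ is canonically identified with $\mathcal{C}$ via the embedding sending an object to the stalk complex concentrated in degree $0$, and similarly $\mathcal{H}_{\mathbbm{t}_{\mathcal{M}}}$ is canonically identified with $\mathcal{M}$. Under these identifications the induced module product $\mathcal{H}_{\mathbbm{t}_{\mathcal{C}}}\otimes\mathcal{H}_{\mathbbm{t}_{\mathcal{M}}}\subseteq\mathcal{H}_{\mathbbm{t}_{\mathcal{M}}}$ (supplied by Lemma \ref{lem:heartmodule}) restricts to the original bifunctor $\mathcal{C}\times\mathcal{M}\to\mathcal{M}$, since tensoring two stalk complexes in degree $0$ is again a stalk complex in degree $0$.

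The third step is to invoke Proposition \ref{lem:faith=faith}, which then tells us that $\mathbf{D}^b(\mathcal{M})$ is faithful as a $\mathbf{D}^b(\mathcal{C})$-module category if and only if $\mathcal{M}$ is faithful as a $\mathcal{C}$-module category. The hypothesis of the corollary delivers precisely the latter faithfulness, giving the first assertion. For the ``in particular'' clause, I would appeal to Corollary \ref{coro:tensorfaithful}: whenever $\mathcal{C}$ is a tensor category, every left $\mathcal{C}$-module category is automatically faithful, so the first assertion applies without further hypothesis on $\mathcal{M}$.

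There is no genuine obstacle here; the result is an immediate specialization of Proposition \ref{lem:faith=faith} in the derived setting, once one has the K\"unneth-style cohomology isomorphism of Proposition \ref{prop:kunneth} and the automatic compatibility coming from $0\in\operatorname{dev}_{\mathbbm{t}_{\mathcal{C}}}(\mathbbm{t}_{\mathcal{M}})$. The only mildly non-trivial point is the verification that the heart-level module product agrees with the original $\mathcal{C}$-action on $\mathcal{M}$, which is immediate from the explicit definition of $\widetilde{\otimes}$ on stalk complexes in Example \ref{ex:derived}.
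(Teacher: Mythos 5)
Your proof is correct and follows exactly the route the paper intends: the paper states the corollary follows by ``combining Corollary~\ref{coro:tensorfaithful} and Proposition~\ref{lem:faith=faith}'' without further elaboration, and your argument fills in precisely the omitted details (taking the standard t-structures, noting $0\in\operatorname{dev}_{\mathbbm{t}_{\mathcal{C}}}(\mathbbm{t}_{\mathcal{M}})$ from Example~\ref{ex:standardt-structure}, identifying the hearts with $\mathcal{C}$ and $\mathcal{M}$, and invoking Corollary~\ref{coro:tensorfaithful} for the ``in particular'' clause). No further comment is needed.
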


\section{Triangulated module equivalences and module equivalences}\label{section4}
In this section, we first study the localization of triangulated module categories under certain assumptions.
We then prove our main result: two subcategories of tensor categories are equivalent as module categories if they are both abelian with finitely many simple objects, and their derived categories are equivalent as triangulated module categories via a monoidal triangulated functor.

We first state the following lemma.
\begin{lemma}\emph{(}\cite[Lemma 4.1 and Example 4.5]{CLZ23}\emph{)}\label{lem:finitet=t0}
Let $\mathbbm{t}_1 = (\mathcal{K}_1^{\leq 0}, \mathcal{K}1^{\geq 0})$ be a bounded $t$-structure on a triangulated category $\mathcal{K}$, whose heart $\mathcal{H}_{\mathbbm{t}_1}$ is a locally finite abelian category with finitely many isomorphism classes of simple objects. Then for any bounded t-structure $\mathbbm{t}_2=(\mathcal{K}^{\leq 0}_2, \mathcal{K}^{\geq 0}_2)$ on $\mathcal{K}$, there are integers $m\leq n$ such that $\mathcal{K}^{\leq m}_1\subseteq \mathcal{K}^{\leq 0}_2 \subseteq \mathcal{K}^{\leq n}_1$ and $\mathcal{K}^{\geq n}_1\subseteq \mathcal{K}^{\geq 0}_2 \subseteq \mathcal{K}^{\geq m}_1$.
\end{lemma}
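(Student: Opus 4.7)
The plan is to exploit the finite list of simples in $\mathcal{H}_{\mathbbm{t}_1}$ to produce a uniform $\mathbbm{t}_2$-window containing the whole heart $\mathcal{H}_{\mathbbm{t}_1}$, then to propagate this window first to the $\mathbbm{t}_1$-aisles by a filtration argument, and finally to handle the opposite inclusions by orthogonality. I would fix representatives $S_1,\dots,S_k$ of the isomorphism classes of simple objects of $\mathcal{H}_{\mathbbm{t}_1}$, and, since $\mathbbm{t}_2$ is bounded, choose integers $a_j\leq b_j$ with $S_j\in \mathcal{K}_2^{\leq b_j}\cap \mathcal{K}_2^{\geq a_j}$. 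Setting $M:=\max_j b_j$ and $N:=\min_j a_j$ makes the window $\mathcal{K}_2^{\leq M}\cap\mathcal{K}_2^{\geq N}$ contain every $S_j$ simultaneously; this is the only step that genuinely uses the finiteness of the simple isomorphism classes.

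The next step is to extend this containment to the whole heart, and then to the $\mathbbm{t}_1$-aisles. Since $\mathcal{H}_{\mathbbm{t}_1}$ is locally finite, every object in it admits a finite Jordan--H\"older filtration whose factors are among the $S_j$; as the subcategories $\mathcal{K}_2^{\leq M}$ and $\mathcal{K}_2^{\geq N}$ are closed under extensions (a standard consequence of axiom (ii) of a t-structure), one deduces $\mathcal{H}_{\mathbbm{t}_1}\subseteq \mathcal{K}_2^{\leq M}\cap\mathcal{K}_2^{\geq N}$. Given $X\in \mathcal{K}_1^{\leq m}$, boundedness of $\mathbbm{t}_1$ says $X$ is a finite iterated extension of the shifts $H^i_{\mathbbm{t}_1}(X)[-i]$ with $i\leq m$, and each such factor lies in $\mathcal{K}_2^{\leq M+i}\subseteq \mathcal{K}_2^{\leq M+m}$. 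Choosing $m:=-M$ forces every factor into $\mathcal{K}_2^{\leq 0}$, so $X\in \mathcal{K}_2^{\leq 0}$ by closure under extensions. The dual argument applied to $\mathcal{K}_1^{\geq n}$ with $n:=-N$ yields $\mathcal{K}_1^{\geq n}\subseteq \mathcal{K}_2^{\geq 0}$.

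The remaining inclusions $\mathcal{K}_2^{\leq 0}\subseteq \mathcal{K}_1^{\leq n}$ and $\mathcal{K}_2^{\geq 0}\subseteq \mathcal{K}_1^{\geq m}$ are where the asymmetry of the hypothesis bites, since nothing is assumed about the simples of $\mathcal{H}_{\mathbbm{t}_2}$ and a direct filtration argument is unavailable. I would bypass this by invoking the orthogonality relations $(\mathcal{K}^{\leq p})^{\perp}=\mathcal{K}^{\geq p+1}$ and ${}^{\perp}(\mathcal{K}^{\geq p})=\mathcal{K}^{\leq p-1}$ that hold for any t-structure. Applying $(-)^{\perp}$ to $\mathcal{K}_1^{\leq m}\subseteq \mathcal{K}_2^{\leq 0}$ and reindexing by one gives $\mathcal{K}_2^{\geq 0}\subseteq \mathcal{K}_1^{\geq m}$, while applying ${}^{\perp}(-)$ to $\mathcal{K}_1^{\geq n}\subseteq \mathcal{K}_2^{\geq 0}$ gives $\mathcal{K}_2^{\leq 0}\subseteq \mathcal{K}_1^{\leq n}$. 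The required inequality $m\leq n$ is automatic from $N\leq a_j\leq b_j\leq M$. I would flag this orthogonality step as the main subtlety: it is the only place where the asymmetry between the two t-structures must be handled without a finiteness hypothesis available on the $\mathbbm{t}_2$-side.
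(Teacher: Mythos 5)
Your argument is correct and self-contained. The paper does not supply a proof of this lemma; it simply cites \cite[Lemma 4.1 and Example 4.5]{CLZ23}, so there is no internal proof to compare against. Your route --- trap the finitely many simples of $\mathcal{H}_{\mathbbm{t}_1}$ in a $\mathbbm{t}_2$-window $[N,M]$, propagate to the whole heart by Jordan--H\"older filtrations and extension-closure of aisles, then slide down the $\mathbbm{t}_1$-Postnikov tower to get $\mathcal{K}_1^{\leq -M}\subseteq\mathcal{K}_2^{\leq 0}$ and $\mathcal{K}_1^{\geq -N}\subseteq\mathcal{K}_2^{\geq 0}$, and finish by the orthogonality $(\mathcal{K}^{\leq p})^{\perp}=\mathcal{K}^{\geq p+1}$, ${}^{\perp}(\mathcal{K}^{\geq p})=\mathcal{K}^{\leq p-1}$ --- is exactly the standard mechanism for comparing a bounded $t$-structure against one with a length heart having finitely many simples, and matches the spirit of the cited CLZ23 result. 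The one place to be slightly careful, which you implicitly get right, is that the Postnikov/truncation step uses boundedness of $\mathbbm{t}_1$ to guarantee the tower terminates (i.e. $\tau_{\leq a-1}X=0$ for some $a$), so the ``finite iterated extension'' claim is legitimate; without boundedness that step would fail.
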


Indeed, a full dense and triangulated functor preserves bounded t-structures.
\begin{lemma}\label{lem:newtstructure}
Suppose that $F:\mathcal{K}_1\rightarrow \mathcal{K}_2$ is a full and dense triangulated functor between triangulated categories $\mathcal{K}_1, \mathcal{K}_2$. Let $\mathbbm{t}=(\mathcal{K}_1^{\leq 0}, \mathcal{K}_1^{\geq 0})$ be a $t$-structure on $\mathcal{K}_1$ and $\mathcal{U}, \mathcal{V}$ be the following full subcategories of $\mathcal{K}_2$:
\begin{eqnarray*}
&&\mathcal{U}:=\{X\in \mathcal{K}_2\mid \text{there exists some } X^\prime\in \mathcal{K}_1^{\leq 0} \text{ such that } F(X^\prime)\cong X\},\\
&&\mathcal{V}:=\{Y\in \mathcal{K}_2\mid \text{there exists some } Y^\prime\in \mathcal{K}_1^{\geq 0} \text{ such that } F(Y^\prime)\cong Y\}.
\end{eqnarray*}
Then $\tilde{\mathbbm{t}}:=(\mathcal{U}, \mathcal{V})$ is a $t$-structure on $\mathcal{K}_2$. In particular, if $\mathbbm{t}$ is bounded, then $\tilde{\mathbbm{t}}$ is also bounded.
\end{lemma}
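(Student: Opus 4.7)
The plan is to verify the three defining axioms of a $t$-structure for $\tilde{\mathbbm{t}}=(\mathcal{U},\mathcal{V})$ by transferring the corresponding axioms for $\mathbbm{t}$ through $F$, and then handle boundedness via density. The subcategories $\mathcal{U}$ and $\mathcal{V}$ are full and closed under isomorphism by construction, so the only tools I will use are the fullness of $F$, its density, and the fact that $F$ is triangulated (hence commutes with the shift and sends distinguished triangles to distinguished triangles).

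For axiom (i), if $X\in\mathcal{U}$ with $X\cong F(X')$ and $X'\in\mathcal{K}_1^{\leq 0}$, then $X[1]\cong F(X'[1])$, and $X'[1]\in\mathcal{K}_1^{\leq 0}$ since $\mathcal{K}_1^{\leq 0}\subseteq\mathcal{K}_1^{\leq 1}$, which gives $X[1]\in\mathcal{U}$; the analogous argument yields $\mathcal{V}[-1]\subseteq\mathcal{V}$. For axiom (ii), pick $U\in\mathcal{U}$ and $W\in\mathcal{V}[-1]$; by the description of $\mathcal{V}[-1]$ we may write $U\cong F(U')$ and $W\cong F(W')$ with $U'\in\mathcal{K}_1^{\leq 0}$ and $W'\in\mathcal{K}_1^{\geq 1}$. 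Since $F$ is full, the induced map $\operatorname{Hom}_{\mathcal{K}_1}(U',W')\to\operatorname{Hom}_{\mathcal{K}_2}(F(U'),F(W'))$ is surjective, and the left-hand side vanishes by axiom (ii) for $\mathbbm{t}$. For axiom (iii), given $X\in\mathcal{K}_2$, density provides $X'\in\mathcal{K}_1$ with $F(X')\cong X$; applying $F$ to the truncation triangle $U'\to X'\to V'\to U'[1]$ (with $U'\in\mathcal{K}_1^{\leq 0}$, $V'\in\mathcal{K}_1^{\geq 1}$) and replacing $F(X')$ by $X$ yields a distinguished triangle $F(U')\to X\to F(V')\to F(U')[1]$ in $\mathcal{K}_2$ with $F(U')\in\mathcal{U}$ and $F(V')\in\mathcal{V}[-1]$.

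For the boundedness statement, suppose $\mathbbm{t}$ is bounded. Any $X\in\mathcal{K}_2$ lifts via density to some $X'\in\mathcal{K}_1$, which lies in $\mathcal{K}_1^{\leq n}\cap\mathcal{K}_1^{\geq -m}$ for some $n,m$. Writing $X'\cong Y'[-n]$ with $Y'\in\mathcal{K}_1^{\leq 0}$ and $X'\cong Z'[m]$ with $Z'\in\mathcal{K}_1^{\geq 0}$, and applying $F$, we conclude $X\in\mathcal{U}[-n]\cap\mathcal{V}[m]$, which gives boundedness of $\tilde{\mathbbm{t}}$.

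I do not expect a serious obstacle here; the argument is essentially bookkeeping. The point demanding care is tracking shifts: one must remember that an object of $\mathcal{V}[-1]$ corresponds precisely to the image of an object in $\mathcal{K}_1^{\geq 1}$ (not in $\mathcal{K}_1^{\geq 0}$), and similarly for iterated shifts of $\mathcal{U}$. Beyond that, fullness is used only for Hom-vanishing, density only for step (iii) and for boundedness, and the triangulated structure of $F$ provides both the compatibility with shifts and the fact that the truncation triangle is transported to a distinguished triangle in $\mathcal{K}_2$.
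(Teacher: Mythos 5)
Your proof is correct and follows essentially the same route as the paper: verify the three $t$-structure axioms by transporting the corresponding data through $F$, using fullness only for the Hom-vanishing and density for the truncation triangle and boundedness. If anything, your version is slightly more careful with the shift bookkeeping in axiom (ii), where the paper's wording takes $Y' \in \mathcal{K}_1^{\geq 0}$ rather than $\mathcal{K}_1^{\geq 1}$; you correctly work with $\mathcal{V}[-1]$.
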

\begin{proof}
 For any $X\in\mathcal{U}$, there exists some $X^\prime \in \mathcal{K}_1^{\leq 0}$ such that $F(X^\prime)\cong X$. By the fact that $\mathbbm{t}=(\mathcal{K}_1^{\leq 0}, \mathcal{K}_1^{\geq 0})$ is a $t$-structure, we have $X^\prime\in T_1^{\leq 1}$. This means that $F(X^\prime[1])\cong F(X^\prime)[1]\in\mathcal{U}$. It follows that $X\in \mathcal{U}[-1]$ and thus $\mathcal{U}\subseteq \mathcal{U}[-1]$. A similar argument shows that $\mathcal{V}[-1]\subseteq\mathcal{V}.$ For any $Y\in\mathcal{V}$, there exists some $Y^\prime \in \mathcal{K}_1^{\geq 0}$ such that $F(Y^\prime)\cong Y$. Using the fact that $F$ is full, we know that $$\operatorname{Hom}_{\mathcal{K}_2}(X, Y)\cong\operatorname{Hom}_{\mathcal{K}_2}(F(X^\prime), F(Y^\prime))=0.$$ Moreover, there exists some distinguished triangle $$M\rightarrow X^\prime \rightarrow N\rightarrow M[1]$$ in $\mathcal{K}_1$, where $M\in \mathcal{K}_1^{\leq 0}, N\in \mathcal{K}_1^{\geq 1}$. Since $F$ is exact, it follows that $$F(M)\rightarrow F(X^\prime) \rightarrow F(N)\rightarrow F(M)[1]$$ is a distinguished triangle with $F(M)\in\mathcal{U}$ and $F(N)\in\mathcal{V}[-1]$. Therefore, $\tilde{\mathbbm{t}}=(\mathcal{U}, \mathcal{V})$ is a $t$-structure. Since $F$ is full and dense, we know that if $t$ is bounded, then $\tilde{\mathbbm{t}}$ is also bounded.
\end{proof}

A subcategory $\mathcal{A}$ of a multiring category $\mathcal{C}$ is called a \textit{left abelian tensor ideal} if it is an abelian category, the inclusion $i: \mathcal{A} \to \mathcal{C}$ is exact, and it satisfies $X \otimes A \in \mathcal{A}$ for all $X \in \mathcal{C}$ and $A \in \mathcal{A}$. It is clear that $\mathcal{A}$ is a $\mathcal{C}$-module category.
A subcategory $\mathcal{K}$ of a monoidal triangulated category $\mathcal{T}$ is called a \textit{left triangulated tensor ideal} if it is itself an triangulated category with an exact inclusion $i:\mathcal{K}\rightarrow \mathcal{T}$ and satisfies $X\otimes K\in \mathcal{K}$ for any $X\in\mathcal{T}$ and $K\in\mathcal{K}$.

In what follows, let $\mathcal{C}$ and $\mathcal{D}$ be tensor categories, and $\mathcal{A} \subset \mathcal{C}$ and $\mathcal{B} \subset \mathcal{D}$ be left abelian tensor ideals. Assume further that $\mathbf{D}^b(\mathcal{A})$ and $\mathbf{D}^b(\mathcal{B})$ are subcategories of $\mathbf{D}^b(\mathcal{C})$ and $\mathbf{D}^b(\mathcal{D})$, respectively. Then we can show that $\mathbf{D}^b(\mathcal{A})$ and $\mathbf{D}^b(\mathcal{B})$ are left triangulated tensor ideals of $\mathbf{D}^b(\mathcal{C})$ and $\mathbf{D}^b(\mathcal{D})$, respectively. Let $F:\mathbf{D}^b(\mathcal{C})\rightarrow \mathbf{D}^b(\mathcal{D})$ be a monoidal triangulated functor, with $F(\mathbf{D}^b(\mathcal{A}))\subseteq \mathbf{D}^b(\mathcal{B}).$ Thus, via $F$, the category $\mathbf{D}^b(\mathcal{B})$ becomes a left $\mathbf{D}^b(\mathcal{C})$-module category, making $F\mid_{\mathbf{D}^b(\mathcal{A})}: \mathbf{D}^b(\mathcal{A}) \to \mathbf{D}^b(\mathcal{B})$ a triangulated $\mathbf{D}^b(\mathcal{C})$-module functor.

With the help of the preceding lemmas, we now establish the following theorem. This result can, in a sense, be viewed as the converse of Remark \ref{rm:Serre=thick}.
\begin{theorem}\label{thm:thick=serre}
Let $\mathcal{C}$ and $\mathcal{D}$ be tensor categories, and let subcategories $\mathcal{A} \subset \mathcal{C}$ and $\mathcal{B} \subset \mathcal{D}$ be abelian categories with finitely many isomorphism classes of simple objects. Assume further that $\mathbf{D}^b(\mathcal{A})$ and $\mathbf{D}^b(\mathcal{B})$ are left triangulated tensor ideals of $\mathbf{D}^b(\mathcal{C})$ and $\mathbf{D}^b(\mathcal{D})$, respectively. Suppose $F:\mathbf{D}^b(\mathcal{C})\rightarrow \mathbf{D}^b(\mathcal{D})$ is a monoidal triangulated functor such that $F(\mathbf{D}^b(\mathcal{A}))\subseteq \mathbf{D}^b(\mathcal{B})$. If $F\mid_{\mathbf{D}^b(\mathcal{A})}$ is full and dense, then
\begin{itemize}
\item[(1)]$F(\mathcal{A})\subseteq \mathcal{B}$ and $F\mid_{\mathcal{A}}:\mathcal{A}\rightarrow \mathcal{B}$ is a $\mathcal{C}$-module functor.
\item[(2)]$\mathcal{A}/\ker(F\mid_{\mathcal{A}})$ is $\mathcal{C}$-module equivalent to $\mathcal{B}$.
\item[(3)]$\ker(F\mid_{\mathbf{D}^b(\mathcal{A})})=\mathbf{D}^{b}_{\ker(F\mid_{\mathcal{A}})}(\mathcal{A})$, where $\mathbf{D}^{b}_{\ker(F\mid_{\mathcal{A}})}(\mathcal{A})$ is a full subcategory of $\mathbf{D}^{b}(\mathcal{A})$ generated by complexes of which homologies are in $\ker(F\mid_{\mathcal{A}}).$
\end{itemize}
\end{theorem}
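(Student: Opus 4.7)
The plan is to transfer the standard t-structure on $\mathbf{D}^b(\mathcal{A})$ along the full, dense, triangulated functor $F|_{\mathbf{D}^b(\mathcal{A})}$ to $\mathbf{D}^b(\mathcal{B})$, identify it with the standard t-structure there, and then descend to the abelian level. Specifically, I first apply Lemma \ref{lem:newtstructure} to $F|_{\mathbf{D}^b(\mathcal{A})}$ together with $\mathbbm{t}_{\mathcal{A}}$, obtaining a bounded t-structure $\tilde{\mathbbm{t}}$ on $\mathbf{D}^b(\mathcal{B})$ whose aisles are the essential images of $D^{\leq 0}_{\mathcal{A}}$ and $D^{\geq 0}_{\mathcal{A}}$. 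Because $F$ is monoidal, the canonical isomorphism $X\otimes F(Y)\cong F(X\otimes Y)$ for $X\in\mathbf{D}^b(\mathcal{C}),\ Y\in\mathbf{D}^b(\mathcal{A})$ ensures that $\tilde{\mathbbm{t}}$ inherits the triangulated module t-structure property from $\mathbbm{t}_{\mathcal{A}}$ (an instance of Example \ref{ex:standardt-structure}); Lemma \ref{lem:det=0} then forces $\operatorname{dev}_{\mathbbm{t}_{\mathcal{C}}}(\tilde{\mathbbm{t}}) = \{0\}$, and the standard $\mathbbm{t}_{\mathcal{B}}$ satisfies the same property.

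The main obstacle I anticipate is identifying $\tilde{\mathbbm{t}}$ with $\mathbbm{t}_{\mathcal{B}}$. Lemma \ref{lem:finitet=t0}, applied with $\mathbbm{t}_1=\mathbbm{t}_{\mathcal{B}}$ (whose heart $\mathcal{B}$ has only finitely many simples), produces integers $m\leq n$ with $D^{\leq m}_{\mathcal{B}}\subseteq\tilde{\mathbbm{t}}^{\leq 0}\subseteq D^{\leq n}_{\mathcal{B}}$ and the analogous sandwich for the co-aisle. To promote the sandwich to equality I intend to combine Lemma \ref{lem:t[-k]} with the uniqueness of deviation from Lemma \ref{lem:det=0}, together with the K\"{u}nneth formula of Proposition \ref{prop:kunneth} and the faithfulness guaranteed by Corollary \ref{coro:triangulatedfaithful}: any discrepancy between $\tilde{\mathbbm{t}}$ and $\mathbbm{t}_{\mathcal{B}}$ would produce a nonzero object of $\mathcal{H}_{\tilde{\mathbbm{t}}}$ concentrated in a nonzero degree of the standard t-structure, whose K\"{u}nneth decomposition against simples of $\mathcal{C}=\mathcal{H}_{\mathbbm{t}_{\mathcal{C}}}$ is pushed into a single cohomological degree by the deviation constraint, violating faithfulness. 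Once $\tilde{\mathbbm{t}}=\mathbbm{t}_{\mathcal{B}}$, we obtain $F(\mathcal{A})=F(\mathcal{H}_{\mathbbm{t}_{\mathcal{A}}})\subseteq\mathcal{H}_{\tilde{\mathbbm{t}}}=\mathcal{B}$, and the $\mathcal{C}$-module functor structure on $F|_{\mathcal{A}}$ is inherited from the monoidal structure of $F$, establishing (1).

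For (2), $F|_{\mathcal{A}}:\mathcal{A}\to\mathcal{B}$ is exact and a $\mathcal{C}$-module functor, and its fullness and density descend from those of $F|_{\mathbf{D}^b(\mathcal{A})}$: for density, given $B\in\mathcal{B}$ choose $X\in\mathbf{D}^b(\mathcal{A})$ with $F(X)\cong B$; applying $F$ to the canonical truncation triangles of $X$, and using $\tilde{\mathbbm{t}}=\mathbbm{t}_{\mathcal{B}}$, forces $B\cong F(H^0(X))$ with $H^0(X)\in\mathcal{A}$; for fullness, use $\operatorname{Hom}_{\mathcal{A}}(A,A')=\operatorname{Hom}_{\mathbf{D}^b(\mathcal{A})}(A,A')$ and its analogue in $\mathcal{B}$. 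An abelian analogue of Lemma \ref{lem:k1/ker=k2}(1) shows $\ker(F|_{\mathcal{A}})$ is a Serre submodule of $\mathcal{A}$, and the universal property of the Serre quotient then yields the $\mathcal{C}$-module equivalence $\mathcal{A}/\ker(F|_{\mathcal{A}})\approx\mathcal{B}$.

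For (3), the inclusion $\supseteq$ follows by induction on the cohomological amplitude of $X\in\mathbf{D}^b(\mathcal{A})$: if every $H^i(X)$ lies in $\ker(F|_{\mathcal{A}})$, successive truncation triangles and the exactness of $F$ yield $F(X)\cong 0$. For the reverse inclusion, the identification $\tilde{\mathbbm{t}}=\mathbbm{t}_{\mathcal{B}}$ makes $F|_{\mathbf{D}^b(\mathcal{A})}$ t-exact for the standard t-structures, so $F(H^i_{\mathbbm{t}_{\mathcal{A}}}(X))\cong H^i_{\mathbbm{t}_{\mathcal{B}}}(F(X))$; if $F(X)\cong 0$, each $H^i(X)\in\ker(F|_{\mathcal{A}})$, placing $X$ in $\mathbf{D}^b_{\ker(F|_{\mathcal{A}})}(\mathcal{A})$.
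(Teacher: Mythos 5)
Your overall skeleton for part (1) is the same as the paper's: push $\mathbbm{t}_{\mathcal{A}}$ along $F|_{\mathbf{D}^b(\mathcal{A})}$ via Lemma~\ref{lem:newtstructure} to get a bounded triangulated module t-structure $\tilde{\mathbbm{t}}$ on $\mathbf{D}^b(\mathcal{B})$, sandwich it against $\mathbbm{t}_{\mathcal{B}}$ using Lemma~\ref{lem:finitet=t0}, and then collapse the sandwich. Parts (2) and (3) also match the paper's route (Serre submodule $\Rightarrow$ \cite[Lemma~3.2]{WZ22} for (2); Remark~\ref{rm:Serre=thick} for (3)), and your more hands-on t-exactness argument for (3) is fine.

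However, the step that collapses the sandwich to $m=n=0$ is where your proposal has a genuine gap. You claim a discrepancy would produce a nonzero object of $\mathcal{H}_{\tilde{\mathbbm{t}}}$ ``concentrated in a nonzero degree of the standard t-structure,'' and then try to derive a contradiction from K\"unneth against simples of $\mathcal{C}$ plus the deviation constraint. Two problems: first, when the heart sits in a window $[m,n]$ with $m<0<n$, there is no reason a heart object concentrated in a single nonzero standard degree exists at all. Second, simples of $\mathcal{C}=\mathcal{H}_{\mathbbm{t}_{\mathcal{C}}}$ live in degree $0$, so by the K\"unneth formula tensoring with them does not move cohomological degrees; faithfulness then gives no contradiction. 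The paper's mechanism is different and essential: choose (using minimality of $n$) some $M'\in D^{\leq 0}_{\mathcal{A}}$ with $H^n_{\mathbbm{t}_{\mathcal{B}}}(F(M'))\neq 0$, observe that $M'\otimes M'\in D^{\leq 0}_{\mathcal{A}}$ because $\mathbf{D}^b(\mathcal{A})$ is a left triangulated tensor ideal and $0\in\operatorname{dev}_{\mathbbm{t}_{\mathcal{C}}}(\mathbbm{t}_{\mathcal{A}})$, and then apply Proposition~\ref{prop:kunneth} together with faithfulness (Corollary~\ref{coro:triangulatedfaithful}) to conclude $H^{2n}_{\mathbbm{t}_{\mathcal{B}}}(F(M')\otimes F(M'))\cong H^n(F(M'))\otimes H^n(F(M'))\neq 0$. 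Since $F(M'\otimes M')\in\mathcal{U}\subseteq D^{\leq n}_{\mathcal{B}}$, this forces $2n\leq n$, i.e.\ $n\leq 0$; dually $m\geq 0$, and $m\leq n$ gives $m=n=0$. This quadratic ``self-tensor'' trick --- exploiting the tensor ideal structure so that $M'\otimes M'$ stays in $\mathbf{D}^b(\mathcal{A})$ --- is what you are missing, and without it the sandwich is not resolved.
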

\begin{proof}
\begin{itemize}
\item[(1)]
Using Lemma \ref{lem:heartmodule}, we know that $\mathcal{A}$ and $\mathcal{B}$ are left $\mathcal{C}$-module categories.
Let $\mathbbm{t}_{\mathcal{A}}=(D^{\leq 0}_{\mathcal{A}}, D^{\geq 0}_{\mathcal{A}})$, $\mathbbm{t}_{\mathcal{B}}=(D^{\leq 0}_{\mathcal{B}}, D^{\geq 0}_{\mathcal{B}})$ and $\mathbbm{t}_{\mathcal{C}}=(D^{\leq 0}_{\mathcal{C}}, D^{\geq 0}_{\mathcal{C}})$ be the standard t-structures on $\mathbf{D}^{b}(\mathcal{A})$, $\mathbf{D}^{b}(\mathcal{B})$ and $\mathbf{D}^{b}(\mathcal{C})$, respectively. Using Lemma \ref{lem:newtstructure}, we can show that $\tilde{\mathbbm{t}}_{\mathcal{A}}=(\mathcal{U}, \mathcal{V})$ is a bounded t-structure on $\mathbf{D}^{b}(\mathcal{B})$, where
\begin{eqnarray*}
&&\mathcal{U}:=\{U\in \mathbf{D}^{b}(\mathcal{B})\mid \exists U^\prime\in D^{\leq 0}_{\mathcal{A}} \text{ such that } F(U^\prime)\cong U\},\\
&&\mathcal{V}:=\{V\in \mathbf{D}^{b}(\mathcal{B})\mid \exists V^\prime\in D^{\geq 0}_{\mathcal{A}} \text{ such that } F(V^\prime)\cong V\}.
\end{eqnarray*}
Moreover, it is straightforward to show that $\tilde{\mathbbm{t}}_{\mathcal{A}}$ is a triangulated module t-structure on $\mathbf{D}^b(\mathcal{B})$ with respect to $\mathbbm{t}_{\mathcal{C}}$ and $\operatorname{dev}_{\mathbbm{t}_{\mathcal{C}}}(\tilde{\mathbbm{t}}_{\mathcal{A}})=0.$
Then by Lemma \ref{lem:finitet=t0}, there are integers $m\leq n$, which can be chosen maximally and minimally respectively, such that $D^{\leq m}_\mathcal{B}\subseteq \mathcal{U}\subseteq D^{\leq n}_{\mathcal{B}}$ and $D^{\geq n}_\mathcal{B}\subseteq \mathcal{V}\subseteq D^{\geq m}_{\mathcal{B}}$. According to \cite[Lemma 2.9]{XZ25}, there exists some $M^\prime\in D^{\leq 0}_{\mathcal{A}}$ such that $H^n_{\mathbbm{t}_\mathcal{B}}(F(M^\prime))\neq0.$
It follows from Corollary \ref{coro:triangulatedfaithful} that $\mathbf{D}^{b}(\mathcal{B})$ is a faithful $\mathbf{D}^{b}(\mathcal{C})$-module category. Using Proposition \ref{prop:kunneth}, we have $$F(M^\prime)\otimes F(M^\prime)\cong F(M^\prime\otimes M^\prime)\in \mathcal{U},$$ with
$H^{2n}_{\mathbbm{t}_{\mathcal{B}}}(F(M^\prime) \otimes F(M^\prime))\neq 0.$ It follows that $2n\leq n$, which means that $n\leq 0$. A similar argument shows that $m\geq 0$. This implies $m = n = 0$, which forces $\tilde{\mathbbm{t}}_\mathcal{A} = \mathbbm{t}_{\mathcal{B}}$.
 Thus $F(\mathcal{A})\subseteq \mathcal{B}$. According to \cite[Lemma 2.3]{CHZ19}, the restriction $F\mid_{\mathcal{A}}: \mathcal{A} \rightarrow \mathcal{B}$ is exact, hence a $\mathcal{C}$-module functor.
\item[(2)]By (1), $F\mid_{\mathcal{A}}: \mathcal{A} \rightarrow \mathcal{B}$ is an exact $\mathcal{C}$-module functor, it follows that $\ker(F\mid_{\mathcal{A}})$ is a Serre submodule category of $\mathcal{A}$. Since $F$ itself is full and dense, so is its restriction $F\mid_{\mathcal{A}}$. According to \cite[Lemma 3.2]{WZ22}, $\mathcal{A}/\ker(F\mid_{\mathcal{A}})$ is $\mathcal{C}$-module equivalent to $\mathcal{B}$.
\item[(3)]By Remark \ref{rm:Serre=thick}, the proof is completed.
\end{itemize}
\end{proof}

\begin{remark}\rm
It is noted that the above conclusion also holds for multiring categories $\mathcal{C}$ and $\mathcal{D}$ satisfying the condition that for any nonzero object $X\in \mathcal{C}$ (or $\mathcal{D}$), $X \otimes X \neq 0$. Such multiring categories are said to be tensor reduced in \cite[Definition 2.18]{XZ25}.
Substituting
$\mathcal{A}=\mathcal{C}$ and $\mathcal{B}=\mathcal{D}$ into Theorem \ref{thm:thick=serre}, the kernel $\ker(F\mid_{\mathcal{C}})$ is a two-sided Serre tensor ideal of $\mathcal{C}$, which is trivial by \cite[Proposition 3.11]{XZ25}. However, in the tensor reduced setting, we obtain the corresponding localization results for the version of monoidal triangulated categories.
\end{remark}

We proceed to prove the following theorem.
\begin{theorem}\label{thm:eq=}
Let $\mathcal{C}$ and $\mathcal{D}$ be tensor categories, and let subcategories $\mathcal{A} \subset \mathcal{C}$ and $\mathcal{B} \subset \mathcal{D}$ be abelian categories with finitely many isomorphism classes of simple objects. Assume further that $\mathbf{D}^b(\mathcal{A})$ and $\mathbf{D}^b(\mathcal{B})$ are left triangulated tensor ideals of $\mathbf{D}^b(\mathcal{C})$ and $\mathbf{D}^b(\mathcal{D})$, respectively.
Suppose $F: \mathbf{D}^b(\mathcal{C}) \to \mathbf{D}^b(\mathcal{D})$ is a monoidal triangulated functor such that $F(\mathbf{D}^b(\mathcal{A})) \subseteq \mathbf{D}^b(\mathcal{B})$ and that its restriction $F|_{\mathbf{D}^b(\mathcal{A})}: \mathbf{D}^b(\mathcal{A}) \to \mathbf{D}^b(\mathcal{B})$ is an equivalence. Then $\mathcal{A}$ and $\mathcal{B}$ are equivalent as left $\mathcal{C}$-module categories.
\end{theorem}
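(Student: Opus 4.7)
The plan is to deduce this theorem as a direct corollary of Theorem \ref{thm:thick=serre}, with one additional observation that eliminates the quotient. Since $F|_{\mathbf{D}^b(\mathcal{A})}$ is assumed to be an equivalence, it is in particular full and dense. Hence all three conclusions of Theorem \ref{thm:thick=serre} apply: $F(\mathcal{A})\subseteq \mathcal{B}$, the restriction $F|_{\mathcal{A}}\colon \mathcal{A}\to\mathcal{B}$ is a $\mathcal{C}$-module functor, there is a $\mathcal{C}$-module equivalence $\mathcal{A}/\ker(F|_{\mathcal{A}}) \simeq \mathcal{B}$, and the triangulated kernel admits the description $\ker(F|_{\mathbf{D}^b(\mathcal{A})}) = \mathbf{D}^{b}_{\ker(F|_{\mathcal{A}})}(\mathcal{A})$.

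With these facts in hand, the remaining task is simply to verify that $\ker(F|_{\mathcal{A}}) = 0$. The key observation is that any equivalence of triangulated categories has trivial kernel, so $\ker(F|_{\mathbf{D}^b(\mathcal{A})})$ consists only of zero objects. Combining this with the third item above, we obtain $\mathbf{D}^{b}_{\ker(F|_{\mathcal{A}})}(\mathcal{A}) = 0$. If $A \in \ker(F|_{\mathcal{A}})$, then the stalk complex concentrated at $A$ in degree zero lies in $\mathbf{D}^{b}_{\ker(F|_{\mathcal{A}})}(\mathcal{A})$, hence is zero, forcing $A \cong 0$. Therefore $\ker(F|_{\mathcal{A}}) = 0$, and the $\mathcal{C}$-module equivalence $\mathcal{A}/\ker(F|_{\mathcal{A}}) \simeq \mathcal{B}$ collapses to $\mathcal{A} \simeq \mathcal{B}$.

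A small verification I would then carry out is that the induced $\mathcal{C}$-module equivalence $\mathcal{A}\to\mathcal{B}$ arising from the universal property of the Serre quotient is precisely the functor $F|_{\mathcal{A}}$ (up to natural isomorphism), so that the module coherence data transported through Theorem \ref{thm:thick=serre} genuinely witness the equivalence via $F|_{\mathcal{A}}$. The main conceptual work has already been absorbed into Theorem \ref{thm:thick=serre}, whose proof uses triangulated module $t$-structures together with the Künneth formula (Proposition \ref{prop:kunneth}), the faithfulness inheritance (Corollary \ref{coro:triangulatedfaithful}), and the finiteness constraint on the heart (Lemma \ref{lem:finitet=t0}) to force the $t$-structure on $\mathbf{D}^b(\mathcal{B})$ pulled back along $F|_{\mathbf{D}^b(\mathcal{A})}$ to agree with the standard one. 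Once that delicate $t$-structure identification is established, the present theorem reduces to the elementary observation that an equivalence has no nonzero kernel, so I do not expect any genuinely new obstacle beyond what Theorem \ref{thm:thick=serre} already handles.
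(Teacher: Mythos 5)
Your proof is correct and takes the same route as the paper, which simply asserts that the theorem is a corollary of Theorem \ref{thm:thick=serre}; you have correctly supplied the omitted step, namely that $F|_{\mathbf{D}^b(\mathcal{A})}$ being an equivalence forces $\ker(F|_{\mathcal{A}})=0$, so that the Serre quotient in part (2) of that theorem collapses to $\mathcal{A}$ itself.
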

\begin{proof}
This result is actually a corollary of Theorem \ref{thm:thick=serre}.
\end{proof}
\begin{remark}\rm
Note that in Theorem \ref{thm:eq=}, if $\mathbf{D}^b(\mathcal{A})$ and $\mathbf{D}^b(\mathcal{B})$ are not assumed to be left triangulated tensor ideals but are only assumed to be subcategories which closed under taking tensor products, then the equivalence between $\mathbf{D}^b(\mathcal{A})$ and $\mathbf{D}^b(\mathcal{B})$ induced by $F$ still implies an equivalence between the original categories $\mathcal{A}$ and $\mathcal{B}$, though not necessarily an equivalence of $\mathcal{C}$-module categories.
\end{remark}

In fact, Theorem \ref{thm:eq=} can be seen as a generalization in part of \cite[Theorem 4.5]{XZ25}, which is obtained by setting the subcategories $\mathcal{A}$ and $\mathcal{B}$ to be the entire categories.
For the reader's convenience, we state this conclusion explicitly below.
\begin{corollary}\label{coro:tensorequ}\emph{(}\cite[Theorem 4.5]{XZ25}\emph{)}
Let $\mathcal{C}$ and $\mathcal{D}$ be tensor categories with finitely many isomorphism classes of simple objects. Then $\mathbf{D}^b(\mathcal{C})$ is monoidal triangulated equivalent to $\mathbf{D}^b(\mathcal{D})$ if and only if $\mathcal{C}$ is tensor equivalent to $\mathcal{D}.$
\end{corollary}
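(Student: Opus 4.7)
The plan is to derive Corollary \ref{coro:tensorequ} as an immediate consequence of Theorem \ref{thm:eq=} applied in the special case $\mathcal{A}=\mathcal{C}$, $\mathcal{B}=\mathcal{D}$. The ``if'' direction is the easier half: given a tensor equivalence $G:\mathcal{C}\to\mathcal{D}$, the derived functor $\mathbf{D}^b(G)$ is a triangulated equivalence by Lemma \ref{lem:DF}, and the monoidal constraints on $G$ propagate termwise through the construction of $\widetilde{\otimes}$ on chain complexes, so that $\mathbf{D}^b(G)$ becomes a monoidal triangulated equivalence.

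For the ``only if'' direction, let $F:\mathbf{D}^b(\mathcal{C})\to\mathbf{D}^b(\mathcal{D})$ be a monoidal triangulated equivalence. I would first check that the hypotheses of Theorem \ref{thm:eq=} hold tautologically with the choice $\mathcal{A}=\mathcal{C}$ and $\mathcal{B}=\mathcal{D}$: each derived category is a left triangulated tensor ideal of itself, the inclusion $F(\mathbf{D}^b(\mathcal{C}))\subseteq \mathbf{D}^b(\mathcal{D})$ is automatic, and $F|_{\mathbf{D}^b(\mathcal{C})}=F$ is the given equivalence. Since $\mathcal{C}$ and $\mathcal{D}$ have finitely many isomorphism classes of simple objects by assumption, the theorem applies and produces an equivalence of left $\mathcal{C}$-module categories between $\mathcal{C}$ and $\mathcal{D}$.

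The remaining step is to upgrade this module equivalence to a tensor equivalence. Here one traces through the proof of Theorem \ref{thm:thick=serre}: the equivalence furnished is precisely the restriction $F|_{\mathcal{C}}:\mathcal{C}\to\mathcal{D}$, obtained by identifying $\mathcal{C}$ and $\mathcal{D}$ with the hearts $\mathcal{H}_{\mathbbm{t}_{\mathcal{C}}}$ and $\mathcal{H}_{\mathbbm{t}_{\mathcal{D}}}$ of the standard $t$-structures and using Lemma \ref{lem:newtstructure} together with Lemma \ref{lem:finitet=t0} to force $F(\mathcal{C})\subseteq \mathcal{D}$. Because $F$ carries a monoidal structure as a triangulated functor and both hearts are stable under $\widetilde{\otimes}$, the natural isomorphisms $F(X\widetilde{\otimes} Y)\cong F(X)\widetilde{\otimes} F(Y)$ restrict to monoidal constraints on $F|_{\mathcal{C}}$, endowing it with the structure of a tensor functor. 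Combined with the fact that it is already an equivalence of categories, this yields the desired tensor equivalence $\mathcal{C}\simeq \mathcal{D}$.

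The main obstacle is not in running the abstract machinery—Theorem \ref{thm:eq=} does the heavy lifting via the compatible $t$-structure argument of Section \ref{section3} and the K\"unneth formula of Proposition \ref{prop:kunneth}—but in verifying that the module equivalence produced by Theorem \ref{thm:eq=} carries its monoidal structure along. The key observation is that the equivalence is not produced abstractly but is literally the restriction of $F$ to hearts, so the monoidal constraints defining $F$ as a monoidal triangulated functor descend automatically to monoidal constraints on $F|_{\mathcal{C}}$.
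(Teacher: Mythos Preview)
Your proposal is correct and matches the paper's approach: the paper does not give a standalone proof but cites \cite{XZ25} and remarks that the corollary is obtained by setting $\mathcal{A}=\mathcal{C}$ and $\mathcal{B}=\mathcal{D}$ in Theorem \ref{thm:eq=}. You have correctly filled in the one detail the paper glosses over with its ``in part'' qualifier---namely, that the $\mathcal{C}$-module equivalence furnished by Theorem \ref{thm:eq=} is literally the restriction $F|_{\mathcal{C}}$ to hearts, so the monoidal constraints carried by $F$ descend to make it a tensor equivalence.
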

It is worth noting that whether the conclusion of Corollary \ref{coro:tensorequ} holds for tensor categories with infinitely many isomorphism classes of simple objects remains unknown. However, Theorem \ref{thm:eq=} does not impose any requirement on whether the two tensor categories have finitely many isomorphism classes of simple objects; it only requires that their respective subcategories have finitely many simple objects. This point can help us investigate the local characterization of monoidal triangulated equivalences between the derived categories of two tensor categories that have infinitely many isomorphism classes of simple objects (such as representation categories of some infinite-dimensional Hopf algebras analogous to quantum groups).

At last, let us give some examples.
\begin{example}\rm
Let $H $ and $H^\prime$ be (possibly infinite-dimensional) Hopf algebras whose coradicals are finite-dimensional. Suppose the derived categories of their finite-dimensional comodule categories are monoidal triangulated equivalent. Corollary \ref{coro:tensorequ} thus yields a tensor equivalence between the finite-dimensional comodule categories $H\text{-}\mathrm{comod}$ and $H^\prime\text{-}\mathrm{comod}$.
If, in addition, their coradicals are Hopf subalgebras, then by \cite[Corollary 5.4]{Far21}, the Grothendieck rings of $H_0\text{-}\mathrm{comod}$ and $H_0^\prime\text{-}\mathrm{comod}$ are isomorphic.
\end{example}

\begin{example}\rm\label{ex:crossed}
Let $\mathcal{C}$ be a tensor category and $A$ is an algebra in $\mathcal{C}$ (see \cite[Definition 7.8.1]{EGNO15}). Denote $\mathrm{Mod}_{\mathcal{C}}(A)$ by the categories of right modules over $A$ in $\mathcal{C}$ (see \cite[Definition 7.8.5]{EGNO15}).
According to \cite[Exercises 7.8.7 and 7.8.8]{EGNO15}, $\mathrm{Mod}_{\mathcal{C}}(A)$ is a left abelian tensor ideal of $\mathcal{C}$. Assume further that $\mathbf{D}^b(\mathrm{Mod}_{\mathcal{C}}(A))$ is a subcategory of $\mathbf{D}^b(\mathcal{C})$, then it is in fact a left triangulated tensor ideal.
In particular, let $H$ be a Hopf algebra and $A$ be a right $H$-module algebra. This means $A$ is equipped with a right $H$-module structure such that both its multiplication and unit maps are morphisms of $H$-modules. Then the smash product algebra $H\# A$, defined as in \cite[Definition 4.1.3]{Mon93}, becomes an algebra in $H\text{-}\mathrm{mod}$. According to \cite[Exercise 7.8.32]{EGNO15}, the module category of $A$ in $H\text{-}\mathrm{mod}$ is equivalent to the category of $H\# A$-modules. It should be remarked that for any algebra $B$, we have $B=\k 1\#B$. In this case, however, Theorem \ref{thm:eq=} becomes trivial.

If in addition, suppose $H$ and its dual $H^*$ are finite-dimensional semisimple Hopf algebras over $\k$, and let $A, B$ be finite-dimensional right $H$-module algebras. If the derived categories of $H\#A$ and $H\#B$ are derived equivalent, then according to \cite[Theorem 3.8]{LS13}, the algebras $A$ and $B$ and the two corresponding smash product algebras have the same derived representation type. Moreover, if $\mathbf{D}^b(H\#A\text{-}\mathrm{mod})$ and $\mathbf{D}^b(H\#B\text{-}\mathrm{mod})$ are subcategories of $\mathbf{D}^b(H\text{-}\mathrm{mod})$,
and such a derived equivalence is also a triangulated $\mathbf{D}^b(H\text{-}\mathrm{mod})$-module functor induced by the monoidal triangulated endofunctor of $\mathbf{D}^b(H\text{-}\mathrm{mod})$, then by Theorem \ref{thm:eq=}, the module categories of $H\#A$ and $H\#B$ are equivalent as module categories over $H\text{-}\mathrm{mod}$. Consequently, it follows from \cite[Theorem 2.6]{LZ07} that the algebras $A$ and $B$ and the two corresponding smash product algebras also have the same representation type.
\end{example}

\section*{Acknowledgement}
The author would like to thank Professors Xiao-Wu Chen, Gongxiang Liu and Zhengfang Wang for their valuable comments and suggestions.
The author also thanks Menggao Li for many informative discussions on this subject.

\end{document}